\theoremstyle{plain} 
\newtheorem{thm}{Theorem}
\newtheorem{cor}{Corollary}
\newtheorem{prop}{Proposition}
\newtheorem*{lem0}{Lemma}
\theoremstyle{definition}
\newtheorem{defn}{Definition}
\theoremstyle{remark} 
\newtheorem{remark}{Remark}
\newtheorem*{astep}{A--step}
\newtheorem*{pstep}{P--step}
\newtheorem*{cstep}{C--step}
\newcommand{\prob}{\mathsf{P}} 
\newcommand{\E}{\mathsf{E}}
\newcommand{\R}{\mathsf{R}}
\newcommand{\Q}{\mathsf{Q}}
\newcommand{\lPi}{\underline{\Pi}}
\newcommand{\uPi}{\overline{\Pi}}
\newcommand{\lgamma}{\underline{\gamma}}
\newcommand{\ugamma}{\overline{\gamma}}
\newcommand{\unif}{{\sf Unif}}
\newcommand{\A}{\mathcal{A}}
\newcommand{\I}{\mathscr{I}}
\newcommand{\RR}{\mathbb{R}}
\newcommand{\U}{\mathcal{U}}
\newcommand{\XX}{\mathbb{X}}
\newcommand{\YY}{\mathbb{Y}}
\newcommand{\UU}{\mathbb{U}}
\newcommand{\ZZ}{\mathbb{Z}}
\renewcommand{\S}{\mathcal{S}}
\newcommand{\plint}{C}%{\mathscr{P}}
\newcommand{\model}{\mathscr{P}}
\title{Valid inferential models for prediction in supervised learning problems\footnote{This is an extended version of the 2021 International Symposium on Imprecise Probability Theory and Applications (ISIPTA) proceedings paper, \citet{CellaMartinISIPTA21}.}}
\author{Leonardo Cella\footnote{Department of Statistics, North Carolina State University; {\tt lolivei@ncsu.edu}, {\tt rgmarti3@ncsu.edu}} \quad and \quad Ryan Martin$^\dagger$}
\date{\today}
\begin{document}

\maketitle

\begin{abstract}
Prediction, where observed data is used to quantify uncertainty about a future observation, is a fundamental problem in statistics. Prediction sets with coverage probability guarantees are a common solution, but these do not provide probabilistic uncertainty quantification in the sense of assigning beliefs to relevant assertions about the future observable.  Alternatively, we recommend the use of a {\em probabilistic predictor}, a data-dependent (imprecise) probability distribution for the to-be-predicted observation given the observed data. It is essential that the probabilistic predictor be reliable or valid, and here we offer a notion of validity and explore its behavioral and statistical implications.  In particular, we show that valid probabilistic predictors must be imprecise, that they avoid sure loss, and that they lead to prediction procedures with desirable frequentist error rate control properties.  We  provide a general construction of a provably valid probabilistic predictor, which has close connections to the powerful conformal prediction machinery, and we illustrate this construction in regression and classification applications.  

% Here, this notion of validity is made precise in supervised learning problems, and the consequences of it in the structure of the probabilistic predictor are explored. 

% We go on to show that a probabilistic predictor based on a nonparametric inferential model construction achieves validity, and that an equivalent solution arises from interpreting the conformal prediction output as the contour function of a consonant plausibility measure.

\smallskip 

{\em Keywords and phrases:} classification; conformal prediction; plausibility contour; random sets; regression
\end{abstract}

\section{Introduction}
\label{S:intro}

%Data-driven prediction of future observations is a fundamental problem.  Here our focus is on so-called {\em supervised learning} applications, where the data $Z=(X,Y)$ consists of a feature or explanatory variable $X \in \RR^d$, for some $d \geq 1$, and a label or response variable $Y \in \YY$; the complementary case of {\em unsupervised learning} is when data consists of only labels $Y$.  The two most common supervised learning examples are {\em regression} and {\em classification}, where $\YY$ is an open and finite subset of $\RR$, respectively.  We consider both regression and classification in what follows.  

Data-driven prediction of future observations is a fundamental problem.  Here our focus is on applications where the data $Z=(X,Y)$ consists of explanatory variables $X \in \XX \subseteq \RR^d$, for some $d \geq 1$, and a response variable $Y \in \YY$. That is, we observe a collection $Z^n = \{Z_i=(X_i,Y_i): i=1,\ldots,n\}$ of $n$ pairs from an exchangeable process.  The two most common examples are {\em regression} and {\em classification}, where $\YY$ is an open subset and finite subset of $\RR$, respectively.  We consider both cases in what follows. The prediction problem corresponds to a case where we are given a value $x_{n+1}$ of the next explanatory variable $X_{n+1}$, and the goal is to predict the corresponding future response $Y_{n+1} \in \YY$. 

By ``prediction'' we mean quantifying uncertainty about $Y_{n+1}$ in a data-dependent way, i.e., depending on the observed data $Z^n$ and the given value $x_{n+1}$ of $X_{n+1}$.  One perspective on prediction uncertainty quantification is the construction of a suitable family of prediction sets representing collections of sufficiently plausible values for $Y_{n+1}$; see, e.g., \citet{Vovk:2005}, \citet{CAMPI2009382}, \citet{kuleshov2018accurate}, and Equation \eqref{eq:pred.set} below.  While prediction sets are practically useful, there are prediction-related tasks that they cannot perform, in particular, it cannot assign degrees of belief (or betting odds, etc.)~to all relevant assertions or hypotheses ``$Y_{n+1} \in A$,'' for $A \subseteq \YY$.  An alternative approach is to develop what we refer to here as a {\em probabilistic predictor}, i.e., a probability-like structure (precise or imprecise probability) defined on $\YY$, depending on $Z^n$ and $x_{n+1}$, designed to quantify uncertainty about $Y_{n+1}$ by directly assigning degrees of belief to relevant assertions.  The most common approach to probabilistic prediction is Bayesian, where a prior distribution for the model is specified and uncertainty is quantified by the posterior predictive distribution of $Y_{n+1}$, given $Z^n$ and $X_{n+1}=x_{n+1}$.  Other non-Bayesian approaches leading to predictive distributions include \citet{LawlessandFredette2005}, \citet{CoolenBayes},  \citet{wang.hannig.iyer.2012}, and  \citet{Vovk2018NonparametricPD}. 

Before moving forward, it is important to distinguish between uncertainty quantification with prediction sets and with probabilistic predictors.  One does not need a full (precise or imprecise) probability distribution to construct prediction sets and, moreover, sets derived from a probabilistic predictor are not guaranteed to satisfy the frequentist coverage probability property that warrants calling them genuine ``prediction sets.''  Therefore, the motivation for going through the trouble of constructing probabilistic predictor, Bayesian or otherwise, must be that there are important prediction-related tasks that prediction sets cannot satisfactorily handle.  In other words, the belief assignments provided by a (precise or imprecise) probability must be a high priority.  Strangely, however, the reliability of probabilistic predictors is only ever assessed in terms of (asymptotic) coverage probability properties of their corresponding prediction sets.  Our unique perspective is that, since belief assignments are a priority, there ought to be a way to directly assess the reliability of a probabilistic predictor's belief assignments.  

%The advantage of probabilistic predictors, compared to prediction sets, is that they provide belief assignments to all sorts of assertions about $Y_{n+1}$.  So there would be no motivation to go to the trouble of constructing a probabilistic predictor in a given application, compared to simply reporting prediction sets, unless having these belief assignments were a high priority.  The typical property boasted by probabilistic predictors, however, is that prediction sets derived from them achieve the nominal coverage probability, at least approximately.  That is, no claims are made about the validity or reliability of the probabilistic predictor's belief assignments.  If belief assignments are a priority in applications, then we ought to have a way to directly assess the reliability of a probabilistic predictor's belief assignments.  

For prediction problems where only the (response) variables $Y_1,\ldots,Y_n$ are observed, \citet{CellaMartinConformal} introduced a notion of validity for probabilistic predictors.  Roughly, their validity condition requires that the subsets $A \subseteq \YY$ to which the probabilistic predictor tends to assign large numerical degrees belief are the same as those that tend to contain $Y_{n+1}$.  The point being that such a constraint ensures that the belief assignments made by the probabilistic predictor are not systematically misleading.  Here we extend their notion of validity to the case where explanatory variables are present, and the precise definitions are given below in Definitions~\ref{def:valid}--\ref{def:uvalid}. 
%the event ``the probabilistic predictor, depending on the observed data, assigns a relatively high degree of belief to $A$ {\em and} $Y_{n+1} \not\in A$'' has relatively low probability; more precise statements are given in Definitions~\ref{def:valid}--\ref{def:uvalid} below.  
It turns out these notions of validity have some important consequences, imposing certain constraints on the mathematical structure of the probabilistic predictor. Indeed, we argue in Section~\ref{S:validity} (see, also, Corollary~\ref{prop:fct} in Section~\ref{S:implications}) that validity 
%in the sense of Definitions~\ref{def:valid}--\ref{def:uvalid} 
can only be achieved by probabilistic predictors that take the form of an imprecise probability distribution. Section~\ref{s:ilust} provides a preview of the formal definition of validity and offers empirical support for the claim that precise probabilistic predictors cannot be valid. 

After formally introducing these notions of validity in Section~\ref{S:validity}, we explore their behavioral and statistical consequences.  First, we show that even the weaker validity property in Definition~\ref{def:valid} implies that the probabilistic predictor avoids (a property stronger than) the familiar sure loss property in the imprecise probability literature, hence is not internally irrational from a behavioral point of view.  We go on to show that prediction-related procedures, e.g., tests and prediction regions, derived from (uniformly) valid probabilistic predictors control frequentist error probability.  The take-away message is that a (uniformly) valid probabilistic predictor provides the ``best of both worlds''---it simultaneously achieves desirable behavioral and statistical properties. 

% The main goals of this paper are 
% \begin{itemize}
% \item to develop an analogous validity property for prediction in supervised learning applications;
% \vspace{-2mm}
% %\item {\color{red} to develop an analogous validity property for prediction applications where the response variables is coupled with explanatory variables;}
% \item to investigate the consequences of validity; 
% \vspace{-2mm}
% \item to provide a construction of a probabilistic predictor that achieves thie validity property;
% \vspace{-2mm}
% \item and to illustrate it in regression and classification.
% \end{itemize}

Given the desirable properties of a valid probabilistic predictor, the natural question is {\em how to construct one?} The probabilistic predictor we construct here is largely based on the general theory of valid {\em inferential models} (IMs) as described in \citet{mainMartin, martinbook}.  Martin and Liu's construction usually assumes a parametric model but, here, we aim to avoid such strong assumptions. For this, we use a particular extension of the so-called {\em generalized IM} approach developed in \citet{martin2015, MARTIN2018105}.  The basic idea is that a link/association between observable data, quantities of interest, and an unobservable auxiliary variable with known distribution can be made without fully specifying the data-generating process.  In Section~\ref{S:IM}, we develop a valid IM construction that assumes only exchangeability of the observed data process, no parametric model assumptions required.  There, in Theorem~\ref{thm:valid}, we establish that this general IM-based probabilistic predictor construction achieves the (uniform) validity property.  The specifics of this construction are presented in Section~\ref{S:Regression}, in the context of regression. Section~\ref{S:Classification} considers the classification problem, and we show that the discreteness of $Y$ in classification problems may cause the IM random set output, from which the probabilistic predictor is derived, to be empty with positive probability. Two possible adjustments are provided, with the one based on suitably ``stretching'' the random set being most efficient.

An important observation is that parallels can be drawn between our proposed IM construction and the conformal prediction approach put forward in \citet{Vovk:2005} and elsewhere.  This is interesting for at least two reasons.  
\begin{itemize}
\item It demonstrates that one does not necessarily need ``new methods'' to construct probabilistic predictors to achieve the desired (uniform) validity property, just an appropriate re-interpretation of the output returned by certain existing methods.  In particular, our proposed IM construction returns a possibility measure whose contour function is the transducer derived from an appropriate conformal prediction algorithm.  Consequently, all we need is the corresponding conformal prediction algorithm to achieve our goals.  
\vspace{-2mm}
\item However, there would be a variety of different ways the conformal prediction algorithm could be re-interpreted as a probabilistic predictor, e.g., as a precise probability distribution or one of several different imprecise probability distributions.  Our developments here reveal that the appropriate re-interpretation, the one that leads to (uniform) validity, is by treating the conformal transducer as the contour function that defines a possibility measure. 
\end{itemize}
These points, along with some other concluding remarks, are given in Section~\ref{S:discuss}.

%The remainder of the paper is organized as follows. The motivation for our focus on probabilistic prediction as opposed to the popular uncertainty quantification through prediction sets, as well as the apparent need for an imprecise probabilistic predictor, are explored in Section~\ref{s:ilust}, through a simple example.

%In Section~\ref{S:validity}, the validity property for probabilistic predictors is formally defined and its consequences are investigated. After a brief background on the general IM theory, a generic construction from which the derived probabilistic predictor would be provably valid is given in Section~\ref{S:IM}. 

% In Sections 4 and 5, the IM constructions is applied in a regression and classification contexts, respectively. 

\section{Prediction validity: a preview}
\label{s:ilust}

To help clarify the difference between the traditional notions of uncertainty quantification in (probabilistic) prediction and the notions we have in mind here, we consider a relatively simple example for illustration, one in which there are no covariates.  That is, suppose we have a sequence of real-valued observables $Y_1,Y_2,\ldots$ and, based on the observations $Y^n = y^n$, the goal is to predict $Y_{n+1}$ in a probabilistic way.  One standard way to approach this is to construct a Bayesian predictive distribution.  This requires specification of a prior distribution over the space of models, an updating step whereby the prior distribution is updated to posterior distribution via Bayes's theorem in light of the observation $Y^n=y^n$, and then that the posterior is converted into a predictive distribution for $Y_{n+1}$, which we will denote by $\Pi^n$; keep in mind that $\Pi^n$ is a function data $Y^n$.  

Of course, there are a number of different summaries one can extract from the predictive distribution $\Pi^n$.  Very often, the only summary considered is a prediction interval, e.g., the smallest set $A$ such that $\Pi^n(A)$ is no less than $1-\alpha$, for some specified level $\alpha \in (0,1)$.  Let this prediction interval be denoted by $C_\alpha(y^n)$.  By construction, $C_\alpha(y^n)$ has posterior predictive probability at least $1-\alpha$, but one typically wants to give this a frequentist interpretation, to conclude that $C_\alpha(Y^n)$ has prediction coverage probability at least the nominal level $1-\alpha$; see Equation~\eqref{eq:coverage} below. In many cases, the Bayesian prediction interval will satisfy this frequentist coverage probability property, at least approximately.  But one might ask: if the goal is to get a prediction interval that attains certain frequentist coverage properties, then why go to the trouble of constructing a full posterior predictive distribution for $Y_{n+1}$?  There are certain advantages to quantifying uncertainty with a full predictive distribution, so these deserve exploration.  

At a fundamental level, it is the predictive probabilities, i.e., $\Pi^n(A)$ for various $A$, that should be meaningful to the data analyst who opted to construct $\Pi^n$.  That is, values of $\Pi^n(A)$ that are large (resp.~small) should suggest that the data show strong (resp.~weak) support for the claim ``$Y_{n+1} \in A$.''  Therefore, based on the observed data and his predictive distribution construction, the data analyst would be inclined to conclude that the aforementioned claims will hold for $A$ with large $\Pi^n(A)$ and will not hold for those with small $\Pi^n(A)$.  But if the data analyst is thinking about the predictive distribution as a {\em method} for prediction, rather than summarizing his personal beliefs about $Y_{n+1}$, then he should care about the reliability of this method. This begs the following question: as a function of $Y^{n+1}$, do the two events $\{\text{$\Pi^n(A)$ is small}\}$ and $\{Y_{n+1} \not\in A\}$ tend to happen simultaneously for all the relevant $A$'s?  If not, then the predictive distribution, treated as a method for predictive inference, lacks reliability in the sense that there is risk of erroneous predictions.  Put differently, suppose the data analyst is a gambler who uses his $\Pi^n(A)$ values to set prices for \$1 bets on the uncertain outcomes ``$Y_{n+1} \in A$.'' Then a lack of reliability in the sense described above implies existence of some $A$ for which the gambler tends to assign a low price to ``$Y_{n+1} \in A$'' and have to pay out \$1.  Of course, a tendency to lose \$1 on low-priced bets can easily lead to ruin. The details in the above discussion will all be formalized in Section~\ref{S:validity}. 

Do the common Bayesian predictive distributions achieve this sort of reliability?  One way to assess this would be to consider the function 
\begin{equation}
\label{eq:f}
f(\alpha) = \prob\{\Pi^n(A) \leq \alpha, \, Y_{n+1} \in A\}, \quad \alpha \in (0,1),
\end{equation}
where $\prob$ denotes the joint distribution of $Y^{n+1}$. This function depends implicitly on $A$, on the chosen construction $y^n \mapsto \Pi^n$ of the predictive distribution, and on the underlying distribution $\prob$. It will be argued below that reliability or, rather, {\em validity} of a probabilistic predictor corresponds to 
\[ f(\alpha) \leq \alpha \quad \text{for all $(\alpha, n, A, \prob)$}. \]
Figure~\ref{fig:no.x} plots the function $f$ for $n=5$, $A=[3,5]$, and $\prob$ corresponding to iid $\unif(-5,5)$ random variables.  This is done for two Bayesian predictive distributions:
\begin{itemize}
\item a parametric version based on a simple iid normal model with a conjugate normal--inverse gamma prior, leading to a suitable Student-t predictive distribution; 
\vspace{-2mm}
\item and a nonparametric version based on the predictive distribution from a Dirichlet process mixture of normals model \citep[e.g.,][Ch.~5]{ghosh2003bayesian}; in fact, this is based on the (non-asymptotic) approximation in \citet{hahn.martin.walker.pred}. 
\end{itemize}
In both cases, we clearly see that there is an interval of $\alpha$ values at which the condition ``$f(\alpha) \leq \alpha$'' fails, hence the Bayesian predictive distribution is not valid in the sense described vaguely above, and more precisely in Section~\ref{S:validity}. That this is not specific to this example and these choices of predictive distribution is established in Corollary~\ref{prop:fct} below.  That precise predictive distribution can fail to be reliable in the sense above motivates our investigation into other probabilistic predictor constructions that are reliable, which necessarily must take the form of imprecise probabilities.  

\begin{figure}[t]
\begin{center}
\scalebox{0.7}{\includegraphics{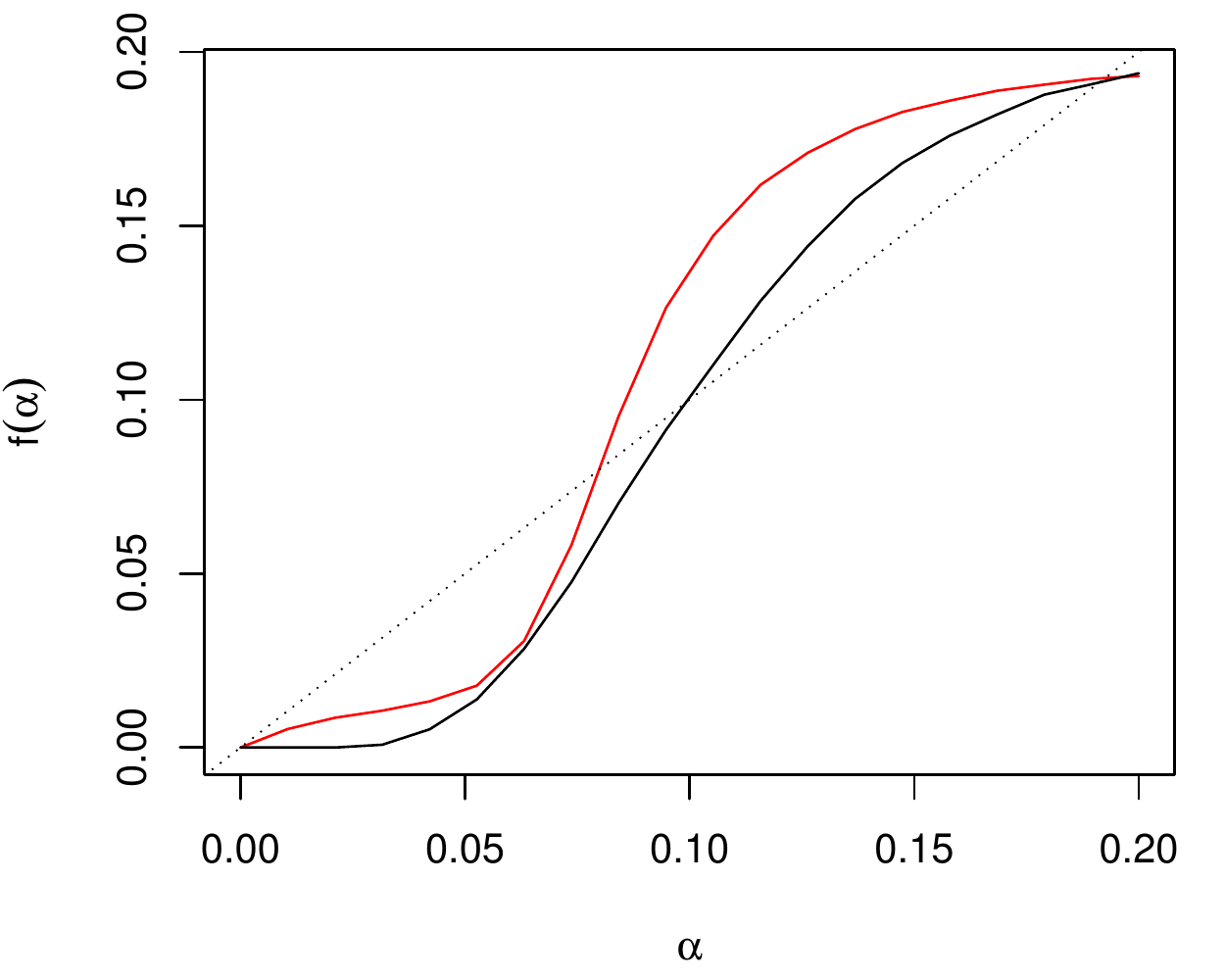}}
\end{center}
\caption{Plot of the function $f$ in \eqref{eq:f} corresponding to the two Bayesian predictive distributions (and other settings) as described in the text.}
\label{fig:no.x}
\end{figure}

Of course, Bayesian predictive distributions are not advertised to be reliable in this sense, so various excuses can be given, e.g., that the condition is too strong, that a different choice of prior distribution would perform better, etc.  To us, however, the fact that a Bayesian probabilistic predictor (or any other construction based on a precise probability distribution for that matter) is sure to put the data analyst and gambler at risk of systematically misleading conclusions and ruin, respectively, is a serious concern.  If precise probability were our only option, then of course we would just have to live with the aforementioned risk.  Here we show, however, that suitably incorporating imprecision into the construction can nullify these risks without sacrificing on the other desirable properties that probabilistic prediction affords.

\section{Prediction validity}
\label{S:validity}

\subsection{Setup}

The goal here is to formalize the ideas discussed in Section~\ref{s:ilust} above. Recall that the present paper is concerned with prediction in supervised learning problems, so we assume there is an exchangeable process $Z^{\infty} = (Z_1,Z_2,\ldots)$ with distribution $\prob$, where each $Z_i$ is a pair $(X_i,Y_i) \in \ZZ = \XX \times \YY$.  As is customary, ``$\prob(Z^n \in B)$'' is understood to mean the marginal probability for the event ``$Z^n \in B$'' derived from the joint distribution of $Z^\infty$ under $\prob$.  The distribution $\prob$ is completely unknown, except that it belongs to the user-defined model $\model$ consisting of exchangeable distributions. As is typical in statistical learning applications, we want to avoid strong model assumptions, which amounts to assuming $\model$ is large, i.e., is not indexed by a finite-dimensional parameter. Given the observed data $Z^n$ and a value $x_{n+1}$ of $X_{n+1}$, the goal is to reliably predict the corresponding $Y_{n+1}$.  As discussed in Sections~\ref{S:intro}--\ref{s:ilust}, a common strategy is to construct a {\em prediction set} aimed to achieve the nominal frequentist coverage probability. That is, a collection of functions $\plint_{n,\alpha}$, from $\ZZ^n \times \XX$ to subsets of $\YY$, indexed by $\alpha \in [0,1]$ and $n \geq 1$, defines a family of $100(1-\alpha)$\% prediction sets for $Y_{n+1}$ if 
\begin{equation}
\label{eq:coverage}
\inf_{\prob \in \model}\prob\{\plint_{n,\alpha}(Z^n,X_{n+1}) \ni Y_{n+1}\} \geq 1-\alpha, \quad \text{for all ($\alpha$, $n$)}.
\end{equation}
%where ``for all $\prob$'' means ``for all exchangeable distributions $\prob$ supported on $\ZZ^\infty$.'' Note that the probability on the left-hand side above is with respect to the joint distribution of $Z^n$ and $Z_{n+1} = (X_{n+1}, Y_{n+1})$ under $\prob$.  
%This uniformity in $\prob$ is needed because the true $\prob$ is unknown so we cannot tailor $\plint_{n,\alpha}$ to the specific $\prob$ in order to achieve \eqref{eq:coverage}. 
However, as discussed in Section~\ref{s:ilust}, a more ``complete'' uncertainty quantification about $Y_{n+1}$ may be desired, beyond prediction sets. 
%More specifically, there are practical problems that ask for probabilistic prediction, where uncertainty quantification about general claims of the form ``$Y_{n+1} \in A$'', for relevant $A \subseteq \YY$, is possible.  %construction of a probability distribution to quantify uncertainty about $Y_{n+1}$ implies that there are prediction-related problems that are both practically relevant and not fully/satisfactorily resolved through the use of prediction sets.  In particular, quantifying uncertainty about claims of the form ``$Y_{n+1} \in A$,'' for relevant $A \subseteq \YY$, in a reliable way is desirable.  
To formalize this, we follow \citet{CellaMartinConformal} and define a {\em probabilistic predictor} as a map $(z^n,x) \mapsto (\lPi_x^n, \uPi_x^n)$, where $(\lPi_x^n, \uPi_x^n)$ is a pair of lower and upper predictive probabilities for the corresponding $Y_{n+1}$; for notational simplicity, the probabilistic predictor's dependence on the observed data $z^n$ is encoded in the superscript ``$n$'' only.  Then uncertainty quantification about $Y_{n+1}$, given $z^n$ and $X_{n+1}=x$, is provided by the function $A \mapsto (\lPi_x^n(A), \uPi_x^n(A))$.  

We are defining the probabilistic predictor for all $n$, but it could be that some minimum sample size is needed in order to properly define it.  For example, if some standardization procedure is being employed, then it would be necessary to have $n$ large enough to estimate standard errors.  As a rule in what follows, if $n$ is smaller than the necessary sample size, then we will silently take the probabilistic predictor to be vacuous, i.e., assign lower and upper probabilities 0 and 1, respectively, to every assertion.  

What kind of mathematical form should the function $A \mapsto (\lPi_x^n(A), \uPi_x^n(A))$ take?  Let $\A$ denote a $\sigma$-algebra of subsets of $\YY$ that are measurable with respect to the (common) marginal of the $Y_i$'s under $\prob$. We will assume that $\A$ is rich enough to contain the singletons, e.g., like the Borel $\sigma$-algebra.  Then the lower and upper probabilities are capacities defined on $\A$, i.e., monotone set functions, taking value 1 at $\YY$ and value 0 at $\varnothing$.  However, being ``lower'' and ``upper'' suggests a link between the two.  We formalize by requiring that, for each $z^n$ and new value $x$ of the feature $X_{n+1}$, the upper probability $\uPi_x^n$ for $Y_{n+1}$ is sub-additive; in particular, for any disjoint $A$ and $A'$, the upper probability satisfies $\uPi_x^n(A \cup A') \leq \uPi_x^n(A) + \uPi_x^n(A')$.  Then the lower probability $\lPi_x^n$ is defined as the dual or conjugate to the upper probability, 
\begin{equation}
\label{eq:dual}
\lPi_x^n(A) = 1 - \uPi_x^n(A^c), \quad A \in \A, 
\end{equation}
and from sub-additivity is follows that 
\[ \lPi_x^n(A) \leq \uPi_x^n(A), \quad A \in \A, \]
hence the name ``lower'' and ``upper'' probabilities.  Ordinary or precise probabilities are (sub)additive so they satisfy these condition with $\lPi_x^n \equiv \uPi_x^n$.  Moreover, all of the standard imprecise probability models---belief functions, possibility measures, lower/upper previsions---satisfy these conditions, so our assumptions corresponding to no loss of generality.  Since we will be interested in the statistical properties of the probabilistic predictor as functions of the data, we will assume that $(Z^n, X_{n+1}) \mapsto (\lPi_{X_{n+1}}^n(A), \uPi_{X_{n+1}}^n(A))$ is measurable for each $n \geq 1$ and for each $A \in \A$. 

The interpretation of the probabilistic predictor's output is subjective and goes as follows.  For given data $z^n$ and a new value $x$ of the feature $X_{n+1}$, the lower and upper probabilities represent 
\begin{align*}
\lPi_x^n(A) & = \text{maximum buying price for the gamble \$$1(Y_{n+1} \in A)$} \\
\uPi_x^n(A) & = \text{minimum selling price for the gamble \$$1(Y_{n+1} \in A)$},
\end{align*}
where $1(B)$ denotes the indicator of the event $B$. Therefore, based on data $z^n$ and new feature $x$, if the investigator's $\lPi_x^n(A)$ is large, then she would be inclined to buy the gamble \$$1(Y_{n+1} \in A)$, whereas, if her $\uPi_x^n(A)$ is small, then she would be inclined to sell the gamble \$$1(Y_{n+1} \in A)$; otherwise, she might choose to neither buy nor sell the gamble. For this reason, $\lPi_x^n(A)$ measures the subjective degree of belief and $\uPi_x^n(A)$ the plausibility of the event ``$Y_{n+1} \in A$.''  Below we introduce an element of objectivity through a requirement that its predictions be reliable in a statistical sense.

\subsection{Definition}

So far, we have imposed minimal mathematical constraints on the probabilistic predictor, plus its interpretation is subjective, so virtually no construction can be ruled out at this point.  However, the probabilistic predictor's practical utility requires that the uncertainty quantification derived from it be reliable in a certain sense.  The particular sense we have in mind is {\em statistical}.
%, but see below for some behavioral consequences.  
That is, we require that inferences drawn based on the probabilistic predictor not be systematically misleading.  Based on the interpretations of the lower and upper probabilities described above, events of the general form 
\[ \{(z^n, x_{n+1}, y_{n+1}): \text{$\lPi_{x_{n+1}}^n(A)$ is large and $y_{n+1} \not\in A$}\} \]
and 
\[ \{(z^n, x_{n+1}, y_{n+1}): \text{$\uPi_{x_{n+1}}^n(A)$ is small and $y_{n+1} \in A$}\}, \]
should they occur, put the investigator at risk of making erroneous predictions and incurring losses, monetary or otherwise.  To protect the investigator from this risk, we impose the following condition on probabilistic predictors, ensuring that the aforementioned undesirable, risk-creating events are controllably rare.  

%In what follows, discrete uniform distributions will become relevant.  So, in certain places where one might see a threshold ``$\alpha$'' in the case of continuous variables, we will instead need 
%\begin{equation}
%\label{eq:kn}
%k_n(\alpha) = (n+1)^{-1} \lfloor (n+1)\alpha \rfloor, \quad \alpha \in [0,1], 
%\end{equation}
%where $\lfloor a \rfloor$ is the greatest integer less than or equal to $a$. Of course, $k_n(\alpha) \leq \alpha$ and $k_n(\alpha) \approx \alpha$ when $n$ is large, so this adjustment is of little practical consequence.  Nevertheless, it is needed to make the theory precise.  

% The most common notion of prediction validity is the one associated with the calibration of prediction regions for $Y_{n+1}$, i.e., prediction regions that achieve the advertised coverage probabilities. Here, we refer this notion of validity as {\em Type-1 validity}.

% \begin{defn}
% \label{def:weak}
% Let $\{\plint_\alpha(z^n,x_{n+1}): \alpha \in [0,1]\}$ denote a family of prediction regions for $Y_{n+1}$ based on observed data $Z^n=z^n$ and given $X_{n+1}=x_{n+1}$. Then the prediction is Type-1 valid if the prediction regions satisfy 
% %\begin{equation}
% %\label{eq:weak}
% \[\inf \prob\{\plint_\alpha(Z^n,X_{n+1}) \ni Y_{n+1}\} \geq 1-\alpha,\]
% %\end{equation}
% for all $\alpha$ and all $n$, where the infimun is over all distributions $\prob$ for the data process.
% \end{defn}

\begin{defn}
\label{def:valid}
The probabilistic predictor $(Z^n,x) \mapsto (\lPi_x^n, \uPi_x^n)$ is {\em valid} if one and, hence, both of the following equivalent conditions hold:
\begin{align}
\sup_{\prob \in \model}\prob\{ \lPi_{X_{n+1}}^n(A) \geq 1- \alpha \, , \, Y_{n+1} \notin A\} & \leq \alpha, \quad \text{for all $(\alpha, n, A)$} \label{eq:valid.lo} \\ 
\sup_{\prob \in \model}\prob\{ \uPi_{X_{n+1}}^n(A) \leq \alpha \, , \, Y_{n+1} \in A\} & \leq \alpha, \quad \text{for all $(\alpha, n, A)$}. \label{eq:valid.up}
\end{align}
Here ``for all $(\alpha, n, A)$'' is short for ``for all $\alpha \in [0,1]$, all $n \geq 1$ and all $A \in \A$.'' The two conditions are equivalent by the duality in \eqref{eq:dual} and the ``for all $A$'' clause. 
\end{defn}

The key point, again, is that validity ensures the probabilistic predictor will not tend to assign small upper probability to assertions about $Y_{n+1}$ that happen to be true, or large lower probability to assertions about $Y_{n+1}$ that happen to be false.  Practically, this ensures that the data analyst is not making systematically misleading predictions.  Such assurances are also fundamentally important to the logic of statistical reasoning.  Following \citet[][p.~42]{fisher1973}, what makes an observation leading to, say, a small value of $\uPi_{x_{n+1}}^n(A)$ informative about the claim ``$Y_{n+1} \in A$'' is that a logical disjunction is created: {\em either the claim does not hold or a small-probability event has occurred}.  Since small-probability events rarely occur, if we observe a small value of $\uPi_{x_{n+1}}^n(A)$, then we are inclined to conclude that $Y_{n+1} \not\in A$.  Validity also has a number of interesting and practically relevant consequences, which we explore in Section~\ref{S:implications}. 

Before moving on, we should mention some connections with certain notions of ``frequency calibration'' in the imprecise probability literature.  In particular, using our terminology and notation, \citet{denoeux2006} defines a probabilistic predictor to have a ``$100(1-\alpha)$\% confidence property,'' for a fixed $\alpha \in [0,1]$, if 
\[ \prob\bigl\{ \uPi_{X_{n+1}}^n(A) \geq \prob(Y_{n+1} \in A \mid Z^n, X_{n+1}) \, \text{ for all $A$} \bigr\} \geq 1-\alpha. \]
This and other variations are discussed more recently in \citet{denoeux.li.2018}.  Obviously, since the event on the left-hand side does not explicitly depend on $\alpha$, it must be that the probabilistic predictor depends implicitly on the specified $\alpha$ value, and various approaches to incorporate this $\alpha$-dependence so that the above property can be achieved are given in the aforementioned references.  The key observation is that calibration requires some relation between the probabilistic predictor for $Y_{n+1}$ and the true conditional distribution of $Y_{n+1}$.  In particular, the prediction upper probability ought to dominate the true conditional probability in some sense.  A similar dominance appears in our definition of validity, but a slight reformulation is needed.  Using iterated expectation, by conditioning on $(Z^n,X_{n+1})$, it is easy to see that \eqref{eq:valid.up} is equivalent to 
\begin{equation}
\label{eq:conditional}
\sup_{\prob \in \model} \E\bigl[ 1\{\uPi_{X_{n+1}}^n(A)\leq \alpha\} \, \prob(Y_{n+1} \in A \mid Z^n, X_{n+1}) \bigr] \leq \alpha, 
\end{equation}
where the expectation is with respect the same $\prob$ over which the supremum is taken.  That is, our notion of validity implies that, when restricted to data sets $(Z^n, X_{n+1})$ for which $\uPi_{X_{n+1}}^n(A)$ is small, the true conditional probability $\prob(Y_{n+1} \in A \mid Z^n, X_{n+1})$ cannot be any bigger on average. Incidentally, there are other notions of calibration/validity in the literature that concern matching up posited predictive distributions with the true probabilities in an average sense, not so unlike what \eqref{eq:conditional} achieves.  See, for example, the calibration property of Venn--Abers predictors in \citet{vovk.petej.2014} and the calibration safety property in \citet{grunwald.safe}.

\subsection{A stronger notion}
\label{s:strong}

That the calibration property imposed in \eqref{eq:valid.up} is required to hold for all $A \subseteq \YY$ might seem overly strong, but it turns out that there is an even stronger property that is particularly useful and can be readily attained.  To state this new property, however, we need some additional notation.  Define the probabilistic predictor's {\em plausibility contour} as 
\begin{equation}
\label{eq:first.contour}
\pi_x^n(y) = \uPi_x^n(\{y\}), \quad x \in \XX, \quad y \in \YY. 
\end{equation}
This is just the upper probability---or plausibility---assigned to singleton assertions about $Y_{n+1}$ of the form $A=\{y\}$, for generic $y \in \YY$.  In general, the plausibility contour is just one of the probabilistic predictor's many features.  But in the important special case where the probabilistic predictor has the mathematical property of {\em consonance}, the plausibility contour actually determines the entire probabilistic predictor.  We will discuss this latter point further below. 

\begin{defn}
\label{def:uvalid}
The probabilistic predictor $(Z^n,x) \mapsto (\lPi_x^n, \uPi_x^n)$ is {\em uniformly valid} if 
\begin{equation}
\label{eq:uvalid.alt}
\sup_{\prob \in \model}\prob\{ \pi_{X_{n+1}}^n(Y_{n+1}) \leq \alpha\} \leq \alpha, \quad \text{for all $(\alpha, n)$}, 
\end{equation}
\end{defn}

The condition \eqref{eq:uvalid.alt} is familiar, at least when connections are drawn to other contexts.  In particular, \eqref{eq:uvalid.alt} closely resembles the properties satisfied by p-values from hypothesis testing in classical statistics.  It is also effectively the same as the so-called {\em fundamental frequentist principle}, or {\em FFP}, in \citet{walley2002}.  But there are still some unanswered questions: in what sense is this definition stronger than that in Definition~\ref{def:valid}, and why do we call this ``uniform'' validity?  The following lemma helps us to answer both.

\begin{lem0}
Uniform validity in the sense of Definition~\ref{def:uvalid} is equivalent to the probabilistic predictor satisfying the following two properties:
\begin{align}
\sup_{\prob \in \model}\prob\{\text{$\lPi_{X_{n+1}}^n(A) \geq 1-\alpha$ and $Y_{n+1} \not\in A$ for some $A$}\} & \leq \alpha, \quad \text{for all $(\alpha, n)$} \label{eq:uvalid.lo} \\
\sup_{\prob \in \model}\prob\{\text{$\uPi_{X_{n+1}}^n(A) \leq \alpha$ and $Y_{n+1} \in A$ for some $A$}\} & \leq \alpha, \quad \text{for all $(\alpha, n)$}. \label{eq:uvalid.up} 
\end{align}
\end{lem0}

\begin{proof}
That both probabilities on the left-hand sides of \eqref{eq:uvalid.lo} and \eqref{eq:uvalid.up} are equal to the left-hand side of \eqref{eq:uvalid.alt} follows from the probabilistic predictor's monotonicity property.
\end{proof}

That uniform validity in the sense of Definition~\ref{def:uvalid} is stronger than validity in the sense of Definition~\ref{def:valid} can now be readily seen.  Indeed, the ``for some $A$'' inside the probability statement in \eqref{eq:uvalid.up} is effectively a union of $A$-dependent events like those in \eqref{eq:valid.up} over all $A$.  So if the union over $A$ of these $A$-dependent events has probability bounded by $\alpha$, then so would any individual event in that union.  This also explains our choice to describe this as ``uniform validity.''  That is, instead of a ``$\leq \alpha$'' bound that holds for each individual assertion $A$, it now must hold simultaneously or uniformly over all such $A$.  

This generalization is important for several reasons.  One of those reasons is technical; see Proposition~\ref{prop:coverage} below.  Another concerns the point that when the ``for all $A$'' clause on the outside of the probability statement in \eqref{eq:valid.up} is moved to the inside, the choice of $A$ can be data-dependent.  To see why this data-dependence might be relevant, consider a gambling scenario in which the agent's opponents have access to the data $(z^n,x)$ at the time of prediction. This allows the opponent to use the data to make strategic choices about which assertions $A$ to negotiate with the agent. Of course, if the agent's opponents can make these more sophisticated data-dependent plays while he is only able to control errors for assertions specified in advance, then that puts him at risk.  Uniform validity, however, protects the agent from this more subtle type of risk.  

Although we currently lack a formal proof, our experience suggests that only {\em consonant} \citep[][Ch.~10]{shafer1976mathematical} probabilistic predictors can achieve uniform validity.  The reason being that, if the plausibility contour, $\pi_x^n$, in \eqref{eq:first.contour}, is restricted in the sense that it cannot attain values arbitrarily close to 1, then the stochastically-no-smaller-than-uniform condition in \eqref{eq:uvalid.alt} likely cannot hold.  And it is precisely this arbitrarily-close-to-1 property that determines consonance; that is, a probabilistic predictor is consonant if and only if its plausibility contour satisfies 
\begin{equation}
\label{eq:sup}
\sup_y \pi_x^n(y) = 1, \quad \text{for all $(z^n, x)$}. 
\end{equation}
In this case, the probabilistic predictor takes the mathematical form of a possibility measure \citep{dubois.prade.book}, and is  determined by its contour function through the relationship 
\begin{equation}
\label{eq:cons}
\uPi_x^n(A) = \sup_{y \in A} \pi_x^n(y), \quad A \in \A. 
\end{equation}
Since the lower and upper probabilities being determined by a single point-function, as opposed to genuine set-functions, consonance amounts to a substantial simplification of the probabilistic predictor.  For our purposes here, and for statistical inference in general \citep{imprecisefrequentist}, this simplification comes with no loss of generality or flexibility.

\section{Implications of prediction validity}
\label{S:implications}

\subsection{Behavioral}

Despite our focus on frequentist-style properties, validity has some important behavioral consequences, \`a la de Finetti, Walley, and others.  
%One example of this is Proposition~\ref{prop:no.sure.loss}, a generalization of the result presented in \citet{CellaMartinConformal}. 
Towards this, define 
\[ \lgamma_n(A) = \inf_{(z^n, x) \in \ZZ^n \times \XX} \lPi_x^n(A) \quad \text{and} \quad \ugamma_n(A) = \sup_{(z^n, x) \in \ZZ^n \times \XX} \uPi_x^n(A), \]
the lower/upper probabilistic predictor evaluated at $A$, optimized over all of its data inputs; recall that $\lPi_x^n$ and $\uPi_x^n$ depend implicitly on an argument $z^n$.  An especially poor specification of prediction probabilities is a situation in which, for some $A \subseteq \YY$, 
\begin{equation}
\label{eq:sure.loss}
\lgamma_n(A) > \inf_{\prob \in \model}\prob(Y_{n+1} \in A) \quad \text{or} \quad \ugamma_n(A) < \sup_{\prob \in \model}\prob(Y_{n+1} \in A). 
\end{equation}
We will refer to this as (one-sided) {\em contraction}.  Ideally, the probabilistic predictor would mimic the true conditional probability at least in the sense that its average over data inputs would not be far from the true marginal probability.  So a situation like in \eqref{eq:sure.loss}, where the probabilistic predictor might be uniformly bounded away from the true marginal probability, is a sign of potential trouble.  For example, if your $\uPi_x^n(A)$ is smaller than the upper bound on the marginal probability of $A$, uniformly in $(z^n, x)$, i.e., {\em no matter what data is observed}, then arguably you should have had a tighter bound on your marginal probability to start.  Inconsistencies like this factor in to the behavioral properties of the probabilistic predictor, and (imprecise) probabilities more generally.  For example, the {\em sure loss} property---see Condition~(C7) in \citet[][Sec.~6.5.2]{walley1991} or Definition~3.3 in \citet{gong.meng.update}---corresponds to an extreme version of contraction where 
\[ \lgamma_n(A) > \sup_{\prob \in \model}\prob(Y_{n+1} \in A) \quad \text{or} \quad \ugamma_n(A) < \inf_{\prob \in \model}\prob(Y_{n+1} \in A). \]
In a gambling context, an inconsistency as severe as in the above display can be leveraged by your opponent to make you a sure loser.  We show below, in Proposition~\ref{prop:no.sure.loss}, that validity and one-sided contraction are incompatible; in particular, validity implies no sure loss.  

Although \eqref{eq:sure.loss} is still a rather strong condition, corresponding to a poor prediction probability specification, there are practically relevant cases where \eqref{eq:sure.loss} holds and creates a genuine risk.  We discuss this below following the proof. 

\begin{prop}
\label{prop:no.sure.loss}
Suppose that the probabilistic predictor, $(z^n,x) \mapsto (\lPi_x^n, \uPi_x^n)$, suffers from one-sided contraction in the sense that \eqref{eq:sure.loss} holds for some $A \subseteq \YY$. Then validity in the sense of Definition~\ref{def:valid} fails.  
\end{prop}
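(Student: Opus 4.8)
The plan is to prove the statement directly: assuming one-sided contraction \eqref{eq:sure.loss}, I would exhibit a triple $(\alpha, n, A)$ together with a distribution $\prob^\ast \in \model$ that witnesses the failure of the corresponding validity inequality in Definition~\ref{def:valid}. Since \eqref{eq:sure.loss} is a disjunction, I would treat its two cases separately, each producing a violation of one of the two equivalent conditions \eqref{eq:valid.lo}--\eqref{eq:valid.up}. The single idea driving both cases is to choose $\alpha$ so that it sits exactly at the uniform contraction boundary $\lgamma_n(A)$ or $\ugamma_n(A)$; this forces the event constraining the predictor to become a sure event, thereby collapsing the joint probability in \eqref{eq:valid.lo}--\eqref{eq:valid.up} onto a marginal probability that contraction guarantees is too large.

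For the first case, suppose $\lgamma_n(A) > \inf_{\prob \in \model}\prob(Y_{n+1} \in A)$ for some $A$. Writing $c = \lgamma_n(A)$, I would set $\alpha = 1 - c \in [0,1]$. By the definition of $\lgamma_n$ we have $\lPi_x^n(A) \geq c = 1-\alpha$ for every data input $(z^n, x)$, so the event $\{\lPi_{X_{n+1}}^n(A) \geq 1-\alpha\}$ has probability one under any $\prob$. Because $\inf_{\prob}\prob(Y_{n+1}\in A) < c$, the definition of the infimum supplies a $\prob^\ast \in \model$ with $\prob^\ast(Y_{n+1} \in A) < c$, equivalently $\prob^\ast(Y_{n+1} \notin A) > 1 - c = \alpha$. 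Combining the two facts, the joint probability in \eqref{eq:valid.lo} reduces to $\prob^\ast(Y_{n+1}\notin A)$, which strictly exceeds $\alpha$; hence the supremum over $\model$ exceeds $\alpha$ and \eqref{eq:valid.lo} fails for this $(\alpha, n, A)$.

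The second case, $\ugamma_n(A) < \sup_{\prob}\prob(Y_{n+1} \in A)$, is the exact dual. Writing $d = \ugamma_n(A)$ and taking $\alpha = d$, the bound $\uPi_x^n(A) \leq d$ for all $(z^n, x)$ makes $\{\uPi_{X_{n+1}}^n(A) \leq \alpha\}$ a sure event, and a distribution $\prob^\ast$ with $\prob^\ast(Y_{n+1} \in A) > d = \alpha$ exists by definition of the supremum; then the joint probability in \eqref{eq:valid.up} equals $\prob^\ast(Y_{n+1}\in A) > \alpha$, violating \eqref{eq:valid.up}. In either case one of the equivalent validity conditions fails, so validity fails. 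I do not expect a serious obstacle here; the only points requiring care are confirming that the chosen $\alpha$ lies in $[0,1]$ (which is automatic since $\lgamma_n(A),\ugamma_n(A) \in [0,1]$) and checking that it is precisely the strict inequality in \eqref{eq:sure.loss} that lets the infimum or supremum yield a witnessing $\prob^\ast$ with a strict margin over $\alpha$.
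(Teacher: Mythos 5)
Your proof is correct and follows essentially the same route as the paper's: the uniform contraction bound makes the thresholding event in \eqref{eq:valid.lo}--\eqref{eq:valid.up} a sure event, collapsing the joint probability onto a marginal probability of ``$Y_{n+1} \in A$'' that the strict inequality in \eqref{eq:sure.loss} forces above $\alpha$. The only cosmetic differences are that you place $\alpha$ exactly at the boundary $\lgamma_n(A)$ or $\ugamma_n(A)$ and argue directly with the joint probability, whereas the paper chooses $\alpha$ strictly between $\ugamma_n(A)$ and $\sup_{\prob \in \model}\prob(Y_{n+1} \in A)$ and routes through the iterated-expectation identity \eqref{eq:conditional}; both are valid, and you also write out the lower-probability case that the paper merely notes is ``very similar.''
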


\begin{proof}
We present the argument here for the case where $\ugamma_n(A) < \sup_{\prob \in \model}\prob(Y_{n+1} \in A)$; the argument for the $\lgamma_n(A)$ bound is very similar.  For the assertion $A$ in \eqref{eq:sure.loss}, define 
\[ \xi_n(A,\alpha) = \sup_{\prob \in \model}\prob\{ \uPi_{X_{n+1}}^n(A) \leq \alpha \, , \, Y_{n+1} \in A\}, \]
so that \eqref{eq:valid.up} is equivalent to
\begin{equation}
\label{eq:strong.alt}
\xi_n(A,\alpha) \leq \alpha \quad \text{for all $(A,\alpha,n)$}. 
\end{equation}
By \eqref{eq:conditional}, we have 
\[ \xi_n(A,\alpha) = \sup_{\prob \in \model} \E\bigl[ 1\{\uPi_{X_{n+1}}^n(A)\leq \alpha\} \, \prob(Y_{n+1} \in A \mid Z^n, X_{n+1}) \bigr]. \]
Since $\uPi_{X_{n+1}}^n(A) \leq \ugamma_n(A)$ by definition, we get 
\[ 1\{\uPi_{X_{n+1}}^n(A)\leq \alpha\} \geq 1\{\ugamma_n(A) \leq \alpha\}. \]
From the alternative representation of $\xi_n(A,\alpha)$, since the lower bound in the above display is constant, it follows that 
\begin{equation}
\label{eq:xi.bound}
\xi_n(A,\alpha) \geq 1\{\ugamma_n(A) \leq \alpha\} \, \sup_{\prob \in \model}\prob(Y_{n+1} \in A). 
\end{equation}
According to \eqref{eq:sure.loss}, there exists an $A \subseteq \YY$ and a threshold $\alpha \in [0,1]$ such that 
\[ \ugamma_n(A) < \alpha < \sup_{\prob \in \model}\prob(Y_{n+1} \in A). \]
Then from \eqref{eq:xi.bound}, with this choice of $(A,\alpha)$, 
\[ \xi_n(A,\alpha) \geq \sup_{\prob \in \model}\prob(Y_{n+1} \in A) > \alpha. \]
Then \eqref{eq:strong.alt} and, hence, \eqref{eq:valid.up} fails, so the claim follows.  
% {\color{magenta}
%  Next, it follows that \eqref{eq:strong.alt} is greater than or equal to
% \[ 1_{\sup_{z^n,x_{n+1}} \uPi_{z^n}(A;x_{n+1}) \leq \alpha} \, \prob(Y_{n+1} \in A). \]
% By \eqref{eq:sure.loss}, there exists a number $\alpha$ such that
% \[ \sup_{z^n,x_{n+1}} \uPi_{z^n}(A;x_{n+1}) < \alpha < \prob(Y_{n+1} \in A), \]
% It follows that, for this $\alpha$, 
% \[ \E\bigl\{ 1_{\uPi_{Z^n}(A;X_{n+1})\leq \alpha} \, \prob(Y_{n+1} \in A \mid X_{n+1},Z^n) \bigr\} \geq \prob(Y_{n+1} \in A), \]
% but, as $\prob(Y_{n+1} \in A) > \alpha$, the inequality in \eqref{eq:strong.alt} is violated.}
\end{proof}

%It is interesting to see that a frequentist calibration property like validity can have meaningful behaviorist consequences.  This gives mathematical support to the following intuition: a method that is externally reliable across applications ought not be internally irrational in any one application.  

Our one-sided contraction \eqref{eq:sure.loss} also resembles the (C8)~portion of the {\em coherence} property in \citet[][Sec.~6.5.2]{walley1991}, but it is missing the (C9)~portion.  Therefore, it appears that validity is not enough to imply coherence as advocated for by Walley and others.  

As discussed above, a very common strategy is one in which the probabilistic predictor is an ordinary probability, i.e., $(z^n, x) \mapsto \Pi_x^n$, where $\Pi_x^n$ is a (precise) probability distribution on $\YY$.  The illustration presented in Section~\ref{s:ilust} suggests that validity might fail when the probabilistic predictor is a (precise) probability distribution.  Of course, if the true distribution is known, i.e., if $\model$ is a singleton, then setting $\Pi_n^x$ equal to the true conditional distribution would be valid; see \eqref{eq:conditional}.  However, when $\model$ is big, as assumed here, we cannot expect that a probability distribution can accommodate both the inherent variability in the response and the uncertainty about the underlying distribution.  So we should anticipate that imprecision is needed in order for predictions to be valid in the sense of Definition~\ref{def:valid}.  The discussion below formalizes this claim.

A precise probabilistic predictor cannot accommodate uncertainty about the model by assigning a range of probabilities for a given assertion.  As such, the probabilities $\Pi_n^x(A)$ will tend to be strictly between 0 and 1.  But if the model $\model$ is large, we fully expect the infimum and supremum of $\prob(Y_{n+1} \in A)$ over $\model$ to be 0 and 1, respectively.  Therefore, if there exists an assertion $A$ such that 
\begin{equation}
\label{eq:contraction}
\inf_{(z^n,x) \in \ZZ^n \times x} \Pi_x^n(A) > 0 \quad \text{or} \quad \sup_{(z^n,x) \in \ZZ^n \times x} \Pi_x^n(A) < 1, 
\end{equation}
with inequalities strict, then \eqref{eq:sure.loss} holds and, by Proposition~\ref{prop:no.sure.loss}, validity fails.  One situation in which the inequalities are strict for some $A$ is when
\begin{equation}
\label{eq:tight} 
\{\Pi_x^n: (z^n, x) \in \ZZ^n \times \XX\} \quad \text{is a tight collection of distributions}. 
\end{equation}
Roughly speaking, tightness prevents the collection of distributions from ``drifting off to infinity,'' keep at least some amount of probability mass to the interior of $\YY$. Tightness always holds for compact $\YY$, at least in all practical cases; for non-compact $\YY$, it would need to be verified case-by-case, using specific features of the map $(z^n, x) \mapsto \Pi_x^n$.  In any case, tightness leads to strict contraction \eqref{eq:contraction} for some $A$, which implies one-sided contraction, which implies validity fails.  

\begin{cor}
\label{prop:fct}
If the probabilistic predictor, $(z^n, x) \mapsto \Pi_x^n$, is a precise probability distribution that satisfies \eqref{eq:tight}, then it is not valid.
\end{cor}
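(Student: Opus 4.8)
The plan is to reduce everything to Proposition~\ref{prop:no.sure.loss}: I will show that the tightness hypothesis \eqref{eq:tight} forces the strict contraction \eqref{eq:contraction} for a suitable assertion $A$, and that, because $\model$ is large, this strict contraction is an instance of the one-sided contraction \eqref{eq:sure.loss}. Once \eqref{eq:sure.loss} is in hand, Proposition~\ref{prop:no.sure.loss} delivers the failure of validity immediately, so no new machinery beyond what precedes the corollary is required.

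First I would unwind the definitions in the precise case. Since the predictor is a precise probability, $\lPi_x^n = \uPi_x^n = \Pi_x^n$, and therefore $\ugamma_n(A) = \sup_{(z^n,x)} \Pi_x^n(A)$ and $\lgamma_n(A) = \inf_{(z^n,x)} \Pi_x^n(A)$. Next I would exploit tightness: fixing any $\epsilon \in (0,1)$, condition \eqref{eq:tight} supplies a compact $K \subseteq \YY$ with $\Pi_x^n(K) \geq 1 - \epsilon$ uniformly over all inputs $(z^n,x)$. Taking $A = K^c$ then gives $\ugamma_n(A) = \sup_{(z^n,x)} \Pi_x^n(K^c) \leq \epsilon < 1$, which is exactly the right-hand inequality in \eqref{eq:contraction}.

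It remains to promote this to \eqref{eq:sure.loss}. Here I would appeal to the largeness of $\model$: because $A = K^c$ is a nonempty proper subset of $\YY$ (nonempty since a compact set cannot exhaust a non-compact $\YY$, proper since $K$ carries positive mass and is therefore nonempty), the model contains exchangeable laws whose $Y$-marginal concentrates essentially all its mass on $K^c$, so $\sup_{\prob \in \model} \prob(Y_{n+1} \in A) = 1$. Combined with $\ugamma_n(A) \leq \epsilon$, this yields $\ugamma_n(A) < \sup_{\prob} \prob(Y_{n+1} \in A)$, the second disjunct of \eqref{eq:sure.loss}, and Proposition~\ref{prop:no.sure.loss} then finishes the proof.

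The step I expect to be the main obstacle is establishing strict contraction from tightness in a way that covers the geometry of $\YY$ uniformly. The complement-of-a-compact argument above is clean when $\YY$ is non-compact (as in regression), but when $\YY$ is compact (as in classification) tightness is automatic and vacuous, so $K$ can be taken to be $\YY$ itself and $K^c = \varnothing$ carries no information; there one must instead argue directly that a precise predictor cannot drive $\Pi_x^n(A)$ uniformly to $0$ or $1$ across all data inputs for a nontrivial $A$, and care is needed both to identify the correct $A$ and to justify $\sup_{\prob} \prob(Y_{n+1} \in A) = 1$ and $\inf_{\prob} \prob(Y_{n+1} \in A) = 0$ from the nonparametric richness of $\model$.
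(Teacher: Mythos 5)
Your proposal is correct and follows essentially the same route as the paper: the paper's official proof is just the one-line reduction to Proposition~\ref{prop:no.sure.loss}, with the chain tightness $\Rightarrow$ strict contraction \eqref{eq:contraction} $\Rightarrow$ one-sided contraction \eqref{eq:sure.loss} sketched informally in the paragraph preceding the corollary, and you simply make that chain explicit (compact $K$ from tightness, $A = K^c$, and $\sup_{\prob \in \model} \prob(Y_{n+1} \in A) = 1$ from the largeness of $\model$). The compact-$\YY$ obstacle you flag is genuine, but it afflicts the paper's own informal justification equally---the authors concede they have no direct proof beyond the tightness route---so it marks a shared limitation rather than a divergence from their argument.
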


\begin{proof}
A direct consequence of Proposition~\ref{prop:no.sure.loss}. 
\end{proof}

The above result establishes a version of the {\em false confidence theorem} \citep{Ryansatellite, MARTIN2019IJAR} in the context of prediction.  It says roughly the following: only probabilistic predictors that take the form of an imprecise probability can be valid in the sense of Definition~\ref{def:valid}.  We do not expect the tightness condition \eqref{eq:tight} is essential to the conclusion of Corollary~\ref{prop:fct}, but we currently are not aware of a direct proof.

\subsection{Statistical}
\label{ss:tests}

Here we consider some more classical frequentist-style prediction tasks.  First, consider testing certain ``hypotheses'' about $Y_{n+1}$. For example, an investor may want to sell a certain asset when its price exceeds some fixed level, say $y^\star$.  So he would like to assess the plausibility of an assertion or hypothesis of the form ``$Y_{n+1} \in A$,'' for $A=[0,y^\star]$ and, in particular, decide if the new price being below the $y^\star$ threshold is too plausible to warrant taking quick action to sell.  We show below that the test 
\begin{equation}
\label{eq:test}
\text{reject ``$Y_{n+1} \in A$'' if and only if $\uPi_x^n(A) \leq \alpha$}, 
\end{equation}
derived from a valid probabilistic predictor controls the error probability at level $\alpha$.  

A more common prediction-related task is the construction of a prediction set, i.e., a set of sufficiently plausible values for $Y_{n+1}$ given the observed data.  A natural way to construct a  prediction set from a probabilistic predictor is 
\begin{equation}
\label{eq:pred.set}
\plint_{n,\alpha}(z^n, x) = \{y: \pi_x^n(y) > \alpha\}, 
\end{equation}
where $\pi_x^n$ is the plausibility contour \eqref{eq:first.contour} based on $(z^n, x)$. Compare this to a Bayesian highest predictive density region.  The following proposition shows that uniform validity implies that this is a genuine $100(1-\alpha)$\% prediction set in the sense that its frequentist coverage probability is at least the advertise/nominal level $1-\alpha$. 

\begin{prop}
\label{prop:coverage}
{\em (a)} If the probabilistic predictor is valid in the sense of Definition~\ref{def:valid}, then the test described in \eqref{eq:test} controls error rates at level $\alpha$ in the sense that 
\[ \sup_{\prob \in \model}\prob\{\text{test based on $(Z^n,X_{n+1})$ rejects and $Y_{n+1} \in A$}\} \leq \alpha. \]
{\em (b)} If the probabilistic predictor is uniformly valid in the sense of Definition~\ref{def:uvalid}, then \eqref{eq:pred.set} defines a genuine $100(1-\alpha)$\% prediction set in the sense that 
\[ \sup_{\prob \in \model}\prob\{ \plint_{n,\alpha}(Z^n, X_{n+1}) \not\ni Y_{n+1} \} \leq \alpha, \quad \text{for all $(\alpha, n)$}. \]
\end{prop}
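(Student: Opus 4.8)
The plan is to show that, once the relevant events are translated into the language of the contour and the upper probability, each part is an immediate restatement of the corresponding validity condition, with essentially no analysis required beyond the definitions.

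First I would handle part (a). By \eqref{eq:test}, the test rejects ``$Y_{n+1} \in A$'' precisely when $\uPi_x^n(A) \leq \alpha$, so the event described as ``the test based on $(Z^n,X_{n+1})$ rejects and $Y_{n+1} \in A$'' is literally $\{\uPi_{X_{n+1}}^n(A) \leq \alpha\} \cap \{Y_{n+1} \in A\}$. Taking the supremum of its probability over $\model$ therefore produces exactly the left-hand side of \eqref{eq:valid.up}, which validity in the sense of Definition~\ref{def:valid} bounds by $\alpha$. Nothing further is needed.

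For part (b) I would first rewrite the non-coverage event. Because \eqref{eq:pred.set} defines $\plint_{n,\alpha}(z^n,x) = \{y : \pi_x^n(y) > \alpha\}$ with a strict inequality, its complement is $\{y : \pi_x^n(y) \leq \alpha\}$, and hence $\{Y_{n+1} \notin \plint_{n,\alpha}(Z^n, X_{n+1})\} = \{\pi_{X_{n+1}}^n(Y_{n+1}) \leq \alpha\}$. The supremum over $\model$ of the probability of this event is then exactly the left-hand side of \eqref{eq:uvalid.alt}, so uniform validity in the sense of Definition~\ref{def:uvalid} delivers the stated bound $\alpha$ directly.

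The one genuinely conceptual point—and the reason part (b) must invoke Definition~\ref{def:uvalid} rather than Definition~\ref{def:valid}—is that the singleton assertion relevant to coverage is $A = \{Y_{n+1}\}$, which is data-dependent. Plain validity \eqref{eq:valid.up} controls the error only for each assertion $A$ fixed in advance, so it cannot simply be applied at the random singleton $\{Y_{n+1}\}$; the ``for some $A$'' formulation \eqref{eq:uvalid.up}, equivalently the contour statement \eqref{eq:uvalid.alt}, is precisely what permits the assertion to depend on the realized future value. I expect this identification—recognizing that coverage of the plausibility-level set is the same event as the uniform-validity event evaluated at the realized $Y_{n+1}$—to be the main thing to get right; once it is in place, the probability estimates themselves follow verbatim from the definitions (and the required measurability is already part of the standing assumptions).
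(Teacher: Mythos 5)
Your proof is correct and follows exactly the same route as the paper's (very terse) proof: part~(a) is a direct translation of the rejection event into the left-hand side of \eqref{eq:valid.up}, and part~(b) identifies the non-coverage event with $\{\pi_{X_{n+1}}^n(Y_{n+1}) \leq \alpha\}$ and applies \eqref{eq:uvalid.alt}. Your added remark about why the data-dependent singleton assertion forces the use of uniform validity rather than plain validity is a worthwhile elaboration of a point the paper only makes in its surrounding discussion, but it does not change the argument.
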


\begin{proof}
Part~(a) is an immediate consequence of the definition of validity, in particular, \eqref{eq:valid.up}.  Part~(b) follows directly from \eqref{eq:uvalid.alt}. 
\end{proof}

Recall our conjecture that uniform validity is satisfied only for probabilistic predictors that are consonant, i.e., fully determined by their plausibility contour via \eqref{eq:cons}.  For consonant probabilistic predictors, the level sets of the plausibility contour, which are nested by definition, play an important role.  In particular, this underlying nested structure allows us to re-express the prediction set in \eqref{eq:pred.set} in terms of the lower probability:
\[ \plint_{n,\alpha}(z^n, x) = \textstyle\bigcap \{A: \lPi_x^n(A) \geq 1-\alpha\}. \]
That is, $\plint_{n,\alpha}(z^n, x)$ can also be interpreted as the smallest assertion $A$ about $Y_{n+1}$ to which the probabilistic predictor assigns lower probability at least $1-\alpha$.

% The event where $\plint_{n,\alpha}(Z^n, X_{n+1})$ misses $Y_{n+1}$ can be written as 
% \begin{align*}
% \plint_{n,\alpha}(Z^n, X_{n+1}) \not\ni Y_{n+1} & \iff Y_{n+1} \not\in \textstyle\bigcap \{A: \lPi_{X_{n+1}}^n(A) \geq 1-\alpha\} \\
% & \iff Y_{n+1} \in \textstyle\bigcup \{A^c: \lPi_{X_{n+1}}^n(A) \geq 1-\alpha\} \\
% & \iff \text{$Y_{n+1} \in A^c$ and $\lPi_{X_{n+1}}^n(A) \geq 1-\alpha$ for some $A$}.
% \end{align*}
% By uniform validity, in particular, Equation~\eqref{eq:uvalid.lo}, the right-most event has $\prob$-probability no more than $\alpha$, proving the claim.

\section{Inferential models}
\label{S:IM}

A relevant question is how to construct a probabilistic predictor that achieves the (uniform) validity condition.  One strategy would be through a {\em generalized Bayes} approach as advocated for in, e.g., \citet[][Sec.~6.4]{walley1991}.  That is, if $\model$ is the set of candidate joint distributions for the observables, the generalized Bayes rule would define an upper prediction probability as 
\begin{equation}
\label{eq:gbayes}
\uPi_x^n(A) = \sup_{\prob \in \model} \prob(Y_{n+1} \in A \mid Z^n, X_{n+1}=x), \quad A \in \A, 
\end{equation}
and corresponding lower probability by replacing $\sup$ by $\inf$.  That this satisfies validity in the sense of Definition~\ref{def:valid} follows from the alternative formulation in \eqref{eq:conditional}.  While this solution might have some appeal, there are also some reasons to be concerned.  First, this would tend to be quite conservative, i.e., a large model space $\model$ implies a wide gap between lower and upper probabilities.  Second, since generalized Bayes would not be consonant, it is doubtful that the desirable uniform validity in Definition~\ref{def:uvalid} holds.  So it is worth considering alternative constructions that might be more efficient.  %That is the goal of the present section.  

An inferential model (IM) is a data-dependent probabilistic structure designed to quantify uncertainty about unknowns, like the probabilistic predictor described above.  The difference is that, as the name suggests, IMs have traditionally focused on the {\em statistical inference} problem, where the unknowns are fixed quantities.  IMs have connections to various other approaches to statistical inference, some that quantify uncertainties with ordinary probabilities, e.g., Bayesian inference, fiducial inference \citep{fisherfiducial}, and generalized fiducial inference \citep{MainHaning}, and others with imprecise probabilities, e.g., Dempster--Shafer theory \citep{dempster.copss, dempster1967, dempster1968a, DEMPSTER2008365, shafer1976mathematical} and other belief function frameworks \citep{denoeux.li.2018, denoeux2014}. While there are some technical differences resulting from the unknown being fixed in the inference case and random in the prediction case, the common goal of providing valid uncertainty quantification is more or less the same.  Therefore, we expect that the key ideas behind the construction of a valid IM for inference ought to be applicable to the prediction problem as well, modulo a few adjustments. Below we describe a  construction of a probabilistic predictor that is valid in the sense described in Section~\ref{S:validity}. 

The general IM construction is composed of three steps. The A-step {\em associates} the observable data and unknown  quantity of interest with an unobservable auxiliary variable whose distribution is fully known. In the early work on IMs, this association was usually a complete description of the data-generating process.  For example, suppose we have, say, $n$ independent and identically distributed (iid) observations $Z_1,\ldots,Z_n$, collected into the vector $Z^n$, from a statistical model with unknown parameter $\theta$.  Then an association would effectively be a description of how to generate data $Z^n$ from that model, i.e., 
\[ Z^n = a(\theta, U^n), \]
where $U^n$ would typically be a vector of iid latent/auxiliary variables with a known distribution, e.g., $\unif(0,1)$.  While such an association can always be written down, there are a few obstacles one might face when trying to complete the IM construction: 
\begin{itemize}
\item When the dimension of $U^n$ is greater than that of $\theta$, as is typical, a dimension reduction step is recommended \citep{condmartin}, but this can be nontrivial. 
\item The association itself requires (more than) a fully specified statistical model for data, which may not be available in the application at hand. 
\end{itemize}
However, \citet{martin2015,MARTIN2018105} showed that the A-step's requirements can be relaxed.  All that is needed is an association that relates a  function of both the data and the unknowns to an unobservable auxiliary variable.  This idea has proved to be useful in a variety of classical \citep{cahoon2019generalized1,CAHOON202151} and modern \citep{CellaMartinBelief} inference problems, and here we develop a version suitable for prediction. 

%Besides the examples in the references above, this idea has been applied to meta-analysis and survival analysis in \citet{cahoon2019generalized1,CAHOON202151}.  An extension of that initial generalization, which can avoid even specification of a statistical model was recently developed in \citet{CellaMartinBelief}, with a focus on machine learning applications.  

Once a generalized association has been set, the remaining steps of the (generalized) IM construction proceed exactly as described in, say, \citet{mainMartin}.  Roughly, the P-step introduces a random set that aims to {\em predict} or guess the unobserved value of the auxiliary variable.  Easy to arrange properties of this user-specified random set ensure that the guessing of the auxiliary variable is done in a reliable way, which turns out to be fundamental for validity.  Next, the C-step {\em combines} the results of the A- and P-steps, yielding a new, data-dependent random set on the space where the quantity of interest resides.  Finally, this random set's distribution determines lower and upper probabilities that can be used to assign degrees of belief and plausibility to any relevant assertion about the unknown quantities of interest.  Below we describe the generalize IM construction in more detail for the prediction problem at hand. 

For prediction, the unknown is $Y_{n+1}$, not a model parameter as in the formulations described above.  So the kind of association needed is one that identifies a function of $(Z^n, Z_{n+1})$ that has a known distribution.  Once found, the three-step (generalized) IM construction proceeds as follows. 

%In what follows, we summarize the three-step  construction in the present context. However, we refer the reader to \cite{CellaMartinConformal} for a detailed step by step exposition. Even though the developments there are suitable for prediction problems that include a response variable alone, the IM construction for prediction problems where the response is coupled with covariates is analogous.   

\begin{astep}
Suppose there exists a function $\phi_n: \ZZ^n \times \ZZ \to \RR$ such that the distribution, say, $\Q_n$, of the random variable $\phi_n(Z^n, Z_{n+1})$ is known, i.e., does not depend on the unknown $\prob$.  Then associate the observable data $Z^n$ and the yet-to-be-observed $Z_{n+1}$ with the unobservable auxiliary variable $U$ as follows:
\begin{equation}
\label{eq:astep}
\phi_n(Z^n, Z_{n+1}) = U, \quad U \sim \Q_n.
\end{equation}
For our case where $Z_{n+1} = (X_{n+1}, Y_{n+1})$ and interest is in $Y_{n+1}$ for a given $X_{n+1}=x$, the association defines a set-valued mapping 
\[ (Z^n, x, u) \mapsto \YY_x^n(u) := \{y \in \YY: \phi_n(Z^n, (x,y)) = u\}. \]
% Here, the suitable functions that relate the observable data, unknown quantities of interest and auxiliary variables, are related to the nonconformity scores in \eqref{eq:nonconform}. More specifically, because of the symmetry of $\Psi$, the image $T^{n+1}$ inherits the exchangeability of $Y^{n+1}$, so a generalized association can take the form
% \[r(T_{n+1}) = U, \quad U \sim \unif(1,2,\ldots, n+1),\]
% where $r(\cdot)$ denotes the (ascending order) ranking operator. This association can be also expressed with the $u$-indexed collection of sets
% \begin{equation}
% \label{eq:cov.assoc}
% \YY_{z^n,x_{n+1}}(u) = \bigl\{y_{n+1}: r\bigl(T_{n+1}(z^n,z_{n+1})\bigr) = u \bigr\}, 
% \end{equation}
% where $u \in \{1,2,\ldots,n+1\}$.
\end{astep}

\begin{pstep}
Define a nested random set $\U$ (see below) on the space $\UU$ of the auxiliary variable $U$, designed to reliably contain realizations of $U \sim \Q_n$ in the sense of \eqref{eq:prs.valid} below.  The distribution of the random set $\U$ will be denoted by $\R_n$.   
% The random set 
% \begin{equation}
% \label{eq:S}
% \S = \{1,2,\ldots,\tilde U\}, \quad \tilde U \sim \unif(1,2,\ldots, n+1), 
% \end{equation}
% is chosen to target the unobserved realization of the auxiliary variable $U$ above. This random set is shown to be the most efficient option in \citet{CellaMartinConformal}.
\end{pstep}

\begin{cstep}
Combine the results of the A- and P-steps to get the data-dependent random set 
\[ \YY_x^n(\U) = \bigcup_{u \in \U} \YY_x^n(u) = \{y \in \YY: \phi_n(Z^n, (x,y)) \in \U\}. \]
Then the distribution of this new random set, derived from the distribution of $\U$, determines the probabilistic predictor for $Y_{n+1}$, i.e., 
\begin{equation}
\label{eq:im.output}
\begin{split}
\lPi_x^n(A) & = \R_n\{\YY_x^n(\U) \subseteq A\} \\
\uPi_x^n(A) & = \R_n\{\YY_x^n(\U) \cap A \neq \varnothing\}.
\end{split}
\end{equation}
% Combine \eqref{eq:cov.assoc} and \eqref{eq:S} above to get a new random set on the $Y_{n+1}$-space, 
% \begin{equation}
% \label{eq:randomset}
% \YY_{z^n,x_{n+1}}(\S) = \{y_{n+1}: r(T_{n+1}(z^{n+1})) \leq \tilde U\},     
% \end{equation}
% where $\tilde U \sim \unif(1,2,\ldots, n+1)$. 
\end{cstep}

\begin{remark}
\label{re:empty}
If $\YY_x^n(\U)$ is empty with positive $\R_n$-probability, then some adjustment to the probabilistic predictor in \eqref{eq:im.output} is needed.  This will be relevant for the classification problem in  Section~\ref{S:Classification}.  
% It is important to mention that if there is positive $\prob_\S$-probability that \eqref{eq:randomset} is empty, then the above probabilities are replaced by conditional probabilities
% given ``$\YY_{z^n,x_{n+1}}(\S) \neq \varnothing$''. In the regression co ntext, explored in Section~\ref{S:Regression}, \eqref{eq:randomset} is non-empty. 
\end{remark}

% The distribution of the random set \eqref{eq:randomset} determines a probabilistic predictor for quantifying uncertainty about $Y_{n+1}$. That is, the upper/lower prediction probabilities for some event $A \subseteq \YY$ of interest are given by
% \begin{align}
% \label{eq:upperlowerIM}
% \begin{cases}
%  \uPi_{z^n}(A ; x_{n+1}) = \prob_\S\{\YY_{z^n,x_{n+1}}(\S) \cap A \neq \varnothing\}, \\
%  \lPi_{z^n}(A ; x_{n+1}) =\prob_\S\{\YY_{z^n,x_{n+1}}(\S) \subseteq A \}.
% \end{cases}
% \end{align}

The above construction is abstract for the purpose of generality. The challenge is in identifying the function $\phi_n$, and examples will be given in Sections~\ref{S:Regression}--\ref{S:Classification} below.  Other examples were explored previously in \citet{Martin2016PriorFreePP} where $\prob$ was assumed to belong to a parametric family. 
%The additional structure provided by the parametric family makes it possible to borrow much of the theory in \citet{martinbook}.  
Here, however, $\prob$ is not indexed by a finite-dimensional parameter, so different techniques are required.  The remainder of this section investigates the properties of the abstract probabilistic predictor construction above.  

The random set $\U$ is assumed to be nested in the sense that, for any two sets in its support, one is a subset of the other.  As a consequence, the derived probabilistic predictor is consonant; that is, its contour function, which is given by 
\begin{equation}
\label{eq:contour}
\pi_x^n(y) = \R_n\{\YY_x^n(\U) \ni y\}, \quad y \in \YY, 
\end{equation}
satisfies \eqref{eq:sup} and, hence, determines the entire probabilistic predictor through the relationship \eqref{eq:cons}, i.e., $\uPi_x^n(A) = \sup_{y \in A} \pi_x^n(y)$.  As discussed in Section~\ref{s:strong}, consonance is important---perhaps necessary---for uniform validity. 

It remains to establish that the probabilistic predictor resulting from the above construction is (uniformly) valid.  This requires stating the  conditions on $\U$ more precisely.  Since the cases in the following sections involve an auxiliary variable $U$ that is discrete, we will focus on the discrete case.  
%For the corresponding theory when $U$ has a continuous distribution, see \citet{martinbook}.  
First, define the random set's contour function 
\[ f(u) = \R_n(\U \ni u), \quad u \in \UU. \]
Then the required link between $\Q_n$ and $\R_n$ is that 
\begin{equation}
\label{eq:prs.valid}
\text{if $U \sim \Q_n$, then $f(U) \sim \unif((n+1)^{-1}\I_{n+1})$},
\end{equation}
where $\I_{n+1} = \{1,\ldots,n,n+1\}$, so that this uniform distribution is discrete.  With this link between the auxiliary variable's distribution $\Q_n$ and the random set's distribution $\R_n$, we are ready to state and prove the main result.  

\begin{thm}
\label{thm:valid}
If the random set $\U$ satisfies \eqref{eq:prs.valid}, and if $\YY_{X_{n+1}}^n(\U)$ is non-empty with $\R_n$-probability~1 for $\prob$-almost all $(Z^n,X_{n+1})$, then the probabilistic predictor defined in \eqref{eq:im.output}, or equivalently \eqref{eq:cons}, is uniformly valid in the sense of Definition~\ref{def:uvalid}.  
\end{thm}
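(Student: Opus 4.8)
The plan is to collapse the uniform-validity inequality \eqref{eq:uvalid.alt} onto a single real-valued pivot, namely $f(U)$ with $U = \phi_n(Z^n, Z_{n+1})$, and then read off the conclusion from the P-step link \eqref{eq:prs.valid}.

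First I would prove the key identity
\[ \pi_{X_{n+1}}^n(Y_{n+1}) = f(U), \qquad U := \phi_n(Z^n, Z_{n+1}). \]
This is a set-membership translation: by the A-step, $y \in \YY_x^n(u)$ holds exactly when $\phi_n(Z^n,(x,y)) = u$, so $y \in \YY_x^n(\U) = \bigcup_{u \in \U}\YY_x^n(u)$ holds exactly when $\phi_n(Z^n,(x,y)) \in \U$. Evaluating at the realized pair $(x,y) = (X_{n+1}, Y_{n+1})$ and recalling $\phi_n(Z^n, Z_{n+1}) = U$, the event $\{Y_{n+1} \in \YY_{X_{n+1}}^n(\U)\}$ coincides with $\{U \in \U\}$. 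Taking $\R_n$-probabilities and comparing with the contour definition \eqref{eq:contour} and with $f(u) = \R_n(\U \ni u)$ gives the identity. I would stress that it holds realization-by-realization, independently of whether $\YY_{X_{n+1}}^n(\U)$ happens to be empty.

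Next I would substitute the identity into \eqref{eq:uvalid.alt}. Fix $\prob \in \model$; then $\prob\{\pi_{X_{n+1}}^n(Y_{n+1}) \leq \alpha\} = \prob\{f(U) \leq \alpha\}$. The decisive feature, inherited from the A-step, is that $U = \phi_n(Z^n, Z_{n+1})$ is pivotal: its law $\Q_n$ is the same for every $\prob \in \model$. Hence the link \eqref{eq:prs.valid} applies to give $f(U) \sim \unif((n+1)^{-1}\I_{n+1})$, a discrete-uniform law on $\{k/(n+1): k = 1, \ldots, n+1\}$ that does not depend on $\prob$. A one-line count of grid points then yields $\prob\{f(U) \leq \alpha\} = \lfloor (n+1)\alpha \rfloor / (n+1) \leq \alpha$ for every $\alpha \in [0,1]$. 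Since this bound is uniform in $\prob$, taking the supremum over $\model$ preserves it, which is precisely \eqref{eq:uvalid.alt}.

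It remains to locate the non-emptiness hypothesis, which I expect to play a supporting rather than load-bearing role. The identity above shows the validity inequality is blind to emptiness; what the hypothesis secures is that \eqref{eq:im.output} defines a bona fide consonant probabilistic predictor in the sense of Section~\ref{S:validity} --- that $\uPi_x^n(\YY) = 1$ and $\lPi_x^n(\varnothing) = 0$, that the contour attains its supremum as in \eqref{eq:sup}, and hence that the representation \eqref{eq:cons} invoked in the statement is legitimate (consonance itself comes for free from the nestedness of $\U$). This clean separation is what lets Remark~\ref{re:empty} postpone the emptiness issue to the construction stage in Section~\ref{S:Classification} without endangering validity. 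The only genuine subtlety I anticipate is bookkeeping in the first step: making sure ``$y \in \YY_x^n(\U)$'' unwinds correctly and that the true-value substitution really does collapse it to the pivot event $\{U \in \U\}$; the downstream discrete-uniform bound is routine.
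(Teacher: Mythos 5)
Your proposal is correct and takes essentially the same route as the paper's proof: the membership equivalence $\YY_{X_{n+1}}^n(\U) \ni Y_{n+1} \iff \U \ni U$ with $U = \phi_n(Z^n,Z_{n+1})$ gives the pointwise identity $\pi_{X_{n+1}}^n(Y_{n+1}) = f(U)$, and then \eqref{eq:prs.valid} together with the pivotality of $U$ (its law $\Q_n$ is free of $\prob$) yields $\prob\{f(U)\leq\alpha\} = \lfloor (n+1)\alpha\rfloor/(n+1) \leq \alpha$ uniformly over $\model$, which is \eqref{eq:uvalid.alt}. Your remark that the inequality itself is blind to emptiness, with the non-emptiness hypothesis serving only to make \eqref{eq:im.output} a bona fide consonant predictor satisfying \eqref{eq:sup} and \eqref{eq:cons}, matches the paper's own comment immediately following the theorem.
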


\begin{proof}
First, for $Z^n$ and $Z_{n+1}=(X_{n+1}, Y_{n+1})$, set $U = \phi_n(Z^n, Z_{n+1})$.  Then  
\[ \YY_{X_{n+1}}^n(\U) \ni Y_{n+1} \iff \U \ni U. \]
The $\R_n$-probability of the left- and right-hand side events are $\pi_{X_{n+1}}^n(Y_{n+1})$ and $f(U)$, respectively, so these two random variables---the first as a function of $(Z^n,Z_{n+1}) \sim \prob$ and the second as a function of $U \sim \Q_n$---have the same distribution.  Equation \eqref{eq:prs.valid} states that $f(U)$ is uniform and, therefore, so is $\pi_{X_{n+1}}^n(Y_{n+1})$. 
\end{proof}

The non-emptiness condition is not necessary for validity, but some adjustment is needed to the definition in \eqref{eq:im.output}, as mentioned in Remark~\ref{re:empty}, to address this.  We will discuss this below in the specific application to classification in Section~\ref{S:Classification}. The requirement in \eqref{eq:prs.valid} that $f(U) \sim \unif((n+1)^{-1}\I_{n+1})$ can be relaxed in a certain sense without compromising validity. That is, validity also holds for any random set such that $f(U)$ is stochastically no smaller than $\unif((n+1)^{-1}\I_{n+1})$. However,  \citet{CellaMartinConformal} show that the choice of $\U$ whose corresponding $f(U)$ is exactly uniformly distributed is most efficient. Fortunately, this is easy to arrange; see \eqref{eq:S} below.

The following is an immediate consequence of the uniform validity conclusion above and the general results in Propositions~\ref{prop:no.sure.loss}--\ref{prop:coverage} in the previous section.  

\begin{cor}
Under the conditions of Theorem~\ref{thm:valid}, the probabilistic predictor defined in \eqref{eq:im.output} avoids sure loss in the sense of \eqref{eq:sure.loss} and admits a prediction set $C_{n,\alpha}$ as in \eqref{eq:pred.set} that achieves the nominal frequentist prediction coverage probability.
% {\color{magenta} Moreover, there is an equivalent form of that prediction set in terms of the plausibility contour, namely, 
% \begin{equation}
% \label{eq:pred.region}
% \plint_\alpha^n(x) = \{y \in \YY: \pi_x^n(y) > \alpha\}
% \end{equation}
% that is computationally more convenient and, of course, also achieves the nominal frequentist coverage probability. }
\end{cor}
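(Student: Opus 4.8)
The plan is to treat this entirely as a bookkeeping exercise that chains together Theorem~\ref{thm:valid} with the structural results of the previous section; no new computation is required. First I would invoke Theorem~\ref{thm:valid} itself: since the hypotheses of the corollary are exactly those of the theorem---the random set $\U$ obeys the link \eqref{eq:prs.valid} and $\YY_{X_{n+1}}^n(\U)$ is $\R_n$-almost-surely non-empty---the probabilistic predictor in \eqref{eq:im.output} is uniformly valid in the sense of Definition~\ref{def:uvalid}. The single logical bridge I need is the implication, established in Section~\ref{s:strong}, that uniform validity is strictly stronger than validity; hence the predictor is also valid in the sense of Definition~\ref{def:valid}.

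With validity in hand, the no-sure-loss claim follows by contraposition of Proposition~\ref{prop:no.sure.loss}. That proposition asserts that one-sided contraction \eqref{eq:sure.loss} for \emph{any} $A \subseteq \YY$ forces validity to fail. Since the present predictor is valid, it cannot suffer one-sided contraction for any $A$; in particular it cannot suffer the more extreme inequalities defining sure loss, so the predictor avoids sure loss as claimed.

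The coverage claim is then immediate from Proposition~\ref{prop:coverage}(b). Because uniform validity has already been secured, part~(b) of that proposition applies verbatim and yields that the set $\plint_{n,\alpha}$ defined in \eqref{eq:pred.set}, using the contour \eqref{eq:contour}, satisfies the nominal coverage bound $\sup_{\prob \in \model}\prob\{\plint_{n,\alpha}(Z^n,X_{n+1}) \not\ni Y_{n+1}\} \leq \alpha$ for all $(\alpha,n)$.

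The argument has no genuinely hard step; the only point demanding care is routing the two different strengths of validity to the right consequence---validity (the weaker notion) to Proposition~\ref{prop:no.sure.loss}, and uniform validity (the stronger notion) to Proposition~\ref{prop:coverage}(b). The former works because uniform validity descends to validity, while the latter needs the full strength of Definition~\ref{def:uvalid}, precisely the ``for some $A$'' uniformity that Proposition~\ref{prop:coverage}(b) relies on through \eqref{eq:uvalid.alt}.
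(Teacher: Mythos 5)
Your proposal is correct and takes essentially the same route as the paper, which derives the corollary as an immediate consequence of the uniform validity established in Theorem~\ref{thm:valid} together with Propositions~\ref{prop:no.sure.loss} and \ref{prop:coverage}. Your bookkeeping---descending from uniform validity to plain validity (via the lemma in Section~\ref{s:strong}) before applying the contrapositive of Proposition~\ref{prop:no.sure.loss}, noting that ruling out one-sided contraction \eqref{eq:sure.loss} a fortiori rules out the more extreme sure-loss inequalities, and feeding uniform validity directly into Proposition~\ref{prop:coverage}(b)---merely makes explicit the steps the paper leaves implicit.
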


Consequently, the proposed probabilistic predictor construction achieves the desired subjective/behaviorist and objective/frequentist properties simultaneously.  Two specific and practically relevant applications of this construction in the context of regression and classification will be presented in Section~\ref{S:Regression} and \ref{S:Classification}, respectively.  

It is important to point out that the kind of validity being considered here is {\em marginal}, which is easiest to understand in the context of calibrated prediction sets as in \eqref{eq:coverage}. That is, the conditional coverage probability of the prediction set is 
\[ x_{n+1} \mapsto \prob\{ \plint_\alpha^n(x_{n+1}) \ni Y_{n+1} \mid X_{n+1} = x_{n+1}\}, \]
a function of $x_{n+1}$.  Then the validity property implies that the expected value of this function, with respect to the marginal distribution of $X_{n+1}$ under $\prob$, is at least $1-\alpha$.  This marginal coverage guarantee, of course, says nothing about the conditional coverage at any particular $x_{n+1}$ values.  Conditional validity is both challenging and practically relevant, and we discuss this briefly in Section~\ref{S:discuss}.

\section{Probabilistic prediction in regression}
\label{S:Regression}

Recall that the A-step requires the specification of a real-valued function $\phi_n$, such that the distribution of $\phi_n(Z^n, Z_{n+1})$ is known. Towards this, given $Z^{n+1}= (Z^n,Z_{n+1})$ consisting of the observable $(Z^n, X_{n+1})$ and  the  yet-to-be-observed $Y_{n+1}$, consider first a transformation $Z^{n+1} \rightarrow T^{n+1}$, defined by
\begin{equation}
\label{eq:nonconform}
T_i = \Psi(Z^{n+1}_{-i},Z_i), \quad i \in \I_{n+1},
\end{equation}
where $Z_{-i}^{n+1} = Z^{n+1}\setminus \{(Y_i,X_i)\}$, and
$\Psi$ is a suitable real-valued function that compares $Y_i$ to a prediction derived from $Z_{-i}^{n+1}$ at $X_i$, being small if they agree and large if they disagree.  For example, to each $Z_{-i}^{n+1}$, one could fit a regression model to get an estimated mean response $\hat\mu_{-i}^{n+1}(X_i)$ and take $T_i$ as the corresponding absolute residual 
\begin{equation}
\label{eq:nonconform_reg}
 T_i = \bigl| Y_i - \hat\mu_{-i}^{n+1}(X_i) \bigr|, \quad i \in \I_{n+1}. 
\end{equation}
The critical property of $\Psi$ is that it be symmetric in the elements of its first vector argument. This symmetry guarantees that the assumed exchangeability in $Z_1,Z_2,\ldots$ is preserved when $Z^{n+1}$ get mapped to $T^{n+1}$. As $T_i$ depends on the entire data $Z^{n+1}$, we will write $T_i(Z^{n+1})$ where necessary to highlight that dependence. In regression, where the $Y_i$'s are continuous and $\Psi$ is non-constant on sets of $Y^{n+1}$ with positive $\prob$-probability, like the one in \eqref{eq:nonconform_reg}, so that there are no ties, a well-known consequence of exchangeability of $T_1,\ldots,T_{n+1}$ is that their ranks are marginally distributed according to $\unif(\I_{n+1})$, the discrete uniform law on $\I_{n+1}$.
%A well-known consequence of exchangeability of $T_1,\ldots,T_{n+1}$ is that their ranks are marginally distributed according to $\unif(\I_{n+1})$, a discrete uniform distribution on $\I_{n+1}$, 

Having identified a function of $(Z^n,Z_{n+1})$ whose distribution is known, we can complete the A-step of the IM construction by writing a version of \eqref{eq:astep} as follows:
\begin{equation}
\label{eq:astepCPrank}
r(T_{n+1}) = U, \quad U \sim \unif(\I_{n+1}),
\end{equation}
where $r(\cdot)$ is the ascending ranking operator. The choice of $T_{n+1}$ instead of any of the other $T_i$'s in \eqref{eq:astepCPrank} is simply because $T_{n+1}$ is the one that holds the to-be-predicted value, $Y_{n+1}$, in special
status.  Note that, while it appears this expression only depends on $T_{n+1}$, it does implicitly depend on all the $T_i$'s and, hence, all of $Z^{n+1}$, through the ranking procedure.  In summary, to complete the A-step, the only task for the data analyst is the specification of $\Psi$. It is worth to mention that, while validity of the probabilistic predictor is guaranteed for any suitable $\Psi$, choices of $\Psi$ that fail to capture the structure of the problem at hand can lead to inefficiency. %For example, in \eqref{eq:nonconform_reg}, efficiency depends on the quality of the estimator $\hat \mu$ of the underlying mean function $\mu$. }

For the P-step, the specification of a nested random set targeting the unobserved realization of the auxiliary variable $U$, introduced above, is needed. Consider 
\begin{equation}
\label{eq:S}
\U = \{1,2,\ldots,U'\}, \quad U' \sim \unif(\I_{n+1}).
\end{equation}
It is straightforward to show that this random set satisfies the critical calibration property \eqref{eq:prs.valid}.
% , i.e., 
% \[f(U) \sim \unif(\I_{n+1}), \quad U \sim \unif(\I_{n+1}),\]
% where $f(\cdot)$ is $\S$'s containment function. 
Moreover, this choice also makes intuitive sense, as $\U$ always includes the value 1. This is desirable given the ascending ranking operator in \eqref{eq:astepCPrank} because it implies values of $Y_{n+1}$ that make the residual $T_{n+1}$ small will be assigned high plausibility. 

Finally, in the C-step, $\U$ is combined with the $u$-indexed collection of sets
\[\YY_{x_{n+1}}^n(u) = \bigl\{y_{n+1}: r\bigl(T_{n+1}(z^{n+1})\bigr) = u \bigr\} \]
that arise from the association \eqref{eq:astepCPrank}.  Here and below, note that $z^{n+1}$ consists of the observed $z^n$ values with $z_{n+1}=(x_{n+1},y_{n+1})$ appended to it.  The particular combination, as described in the previous section, 
It is easy to see that $\YY_{x_{n+1}}^n(\U)$'s corresponding contour function for $Y_{n+1}$ is given by
 \begin{align}
 %\pi_x^n(y) = \Q_{n,\S}\{\YY_x^n(\S) \ni y\}, \quad y \in \YY. 
 \pi_{x_{n+1}}^n(y_{n+1}) & = \R_n\{\YY_{x_{n+1}}^n(\U) \ni y_{n+1}\} \nonumber \\
 & = \text{prob}\{\unif(\I_{n+1}) \geq r(T_{n+1}(z^{n+1}))\} \nonumber \\
& = \frac{1}{n+1} \sum_{i=1}^{n+1} 1\{T_i(z^{n+1}) \geq T_{n+1}(z^{n+1})\}. \label{eq:contourfunc}
\end{align}
%Algorithm 1 describes the computation of this contour function for various candidate values of $y_{n+1}$ in a grid. {\color{red} Do we need to include this algorithm?}. Moreover, 
As $\YY_{x_{n+1}}^n(\U)$ is both nested and non-empty, its contour function above is all that is needed to define a probabilistic predictor and, consequently, quantify uncertainty about any assertion $A \subset \YY$ of interest.  Uniform validity of this probabilistic predictor follows directly from the general result in Theorem~\ref{thm:valid}.  

% For example, an upper probability about $A$ would be given by \eqref{eq:cons}, which can easily be approximated by
% \[\uPi_{x_{n+1}}^n(A) \approx \max_{\text{$y$ on a grid and in $A$}} \pi_{x_{n+1}}^n(y).\]

%\begin{algorithm}[t]
%\caption{\bf Plausibility contour}
%\SetAlgoLined
% Initialize: data $z^n = \{z_1,\ldots,z_n\}$ and $x_{n+1}$, non-conformity measure $\Psi$, and a grid of $\hat y$ values\\
% \For{each $\hat y$ value on the grid}{
%  Set $z_{n+1}=(x_{n+1},\hat y)$ and write $z^{n+1} = z^n\cup\{z_{n+1}\}$ \\
 % Define $T_i = \Psi(z^{n+1}_{-i},z_i)$ for $i=1,\ldots,n,n+1$
 % \\
%  Compute $\pi(\hat y ;z^n, x_{n+1}) = (n+1)^{-1} \sum_{i=1}^{n+1} 1\{T_i \geq T_{n+1}\}$\;
% }
%Return $\pi(\hat y ;z^n, x_{n+1})$ for each $\hat y$ on the grid.
%\end{algorithm}

%Uniform validity of the probabilistic predictor derived in this section is a direct consequence of Theorem~\ref{thm:valid}. Consequently, this probabilistic predictor satisfies \eqref{eq:valid.up}, so we are guaranteed that the assignment of small (large) upper (lower) probabilities that happen to be true (false) will be controllably rare, which prevents the data analyst from making systematically misleading predictions.

%To illustrate the practicality and flexibility of this approach, 

For illustration, consider the following example. Let $X_1,\ldots,X_n$ be iid $\unif(0,1)$, with $n=200$, and let $Y_1,\ldots,Y_n$ be independent, where $Y_i = \mu(X_i) + 0.1 \varepsilon_i$, where $\mu(x) = \sin^3(2\pi x^{3})$, and $\varepsilon_1,\ldots,\varepsilon_n$ are iid from a Student-t distribution with $\text{df}=5$.  Figure~\ref{fig:covariates} displays the data, the true regression function $\mu(x)$ and the fitted regression curve $\hat{\mu}(x)$ based on a B-spline with 12 degrees of freedom. A 95\% prediction band is also displayed, derived by \eqref{eq:pred.set} and $x_{n+1}$ taking values along the observed $x^n$.

\begin{figure}[t]
\begin{center}
%\subfigure[Plausibility contours at selected values of $x$]{\scalebox{0.5}{\includegraphics{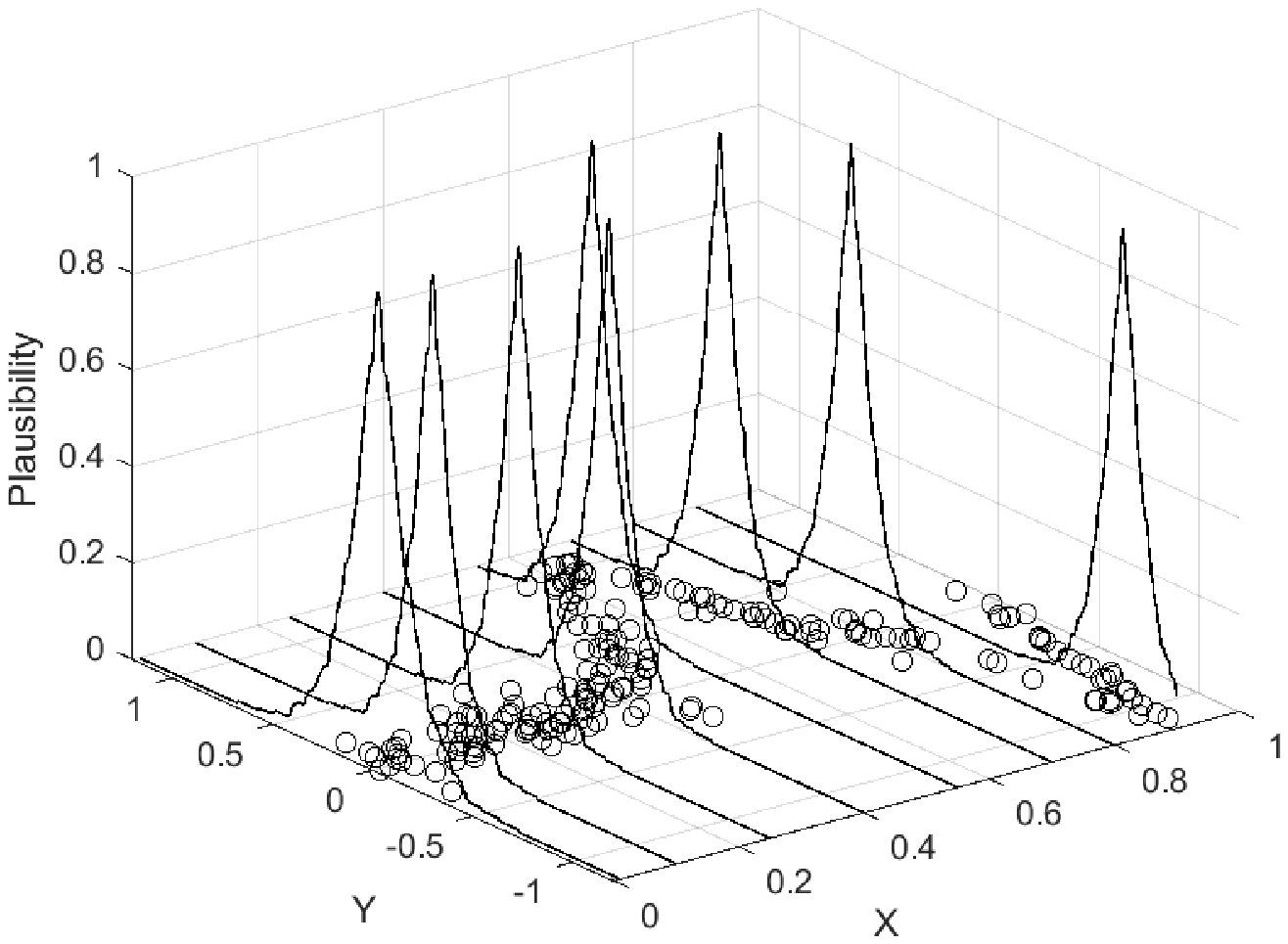}}}
\subfigure[]{\scalebox{0.57}{\includegraphics{Reg1.eps}}}
%\subfigure[95\% pointwise prediction band]{\scalebox{0.65}{\includegraphics{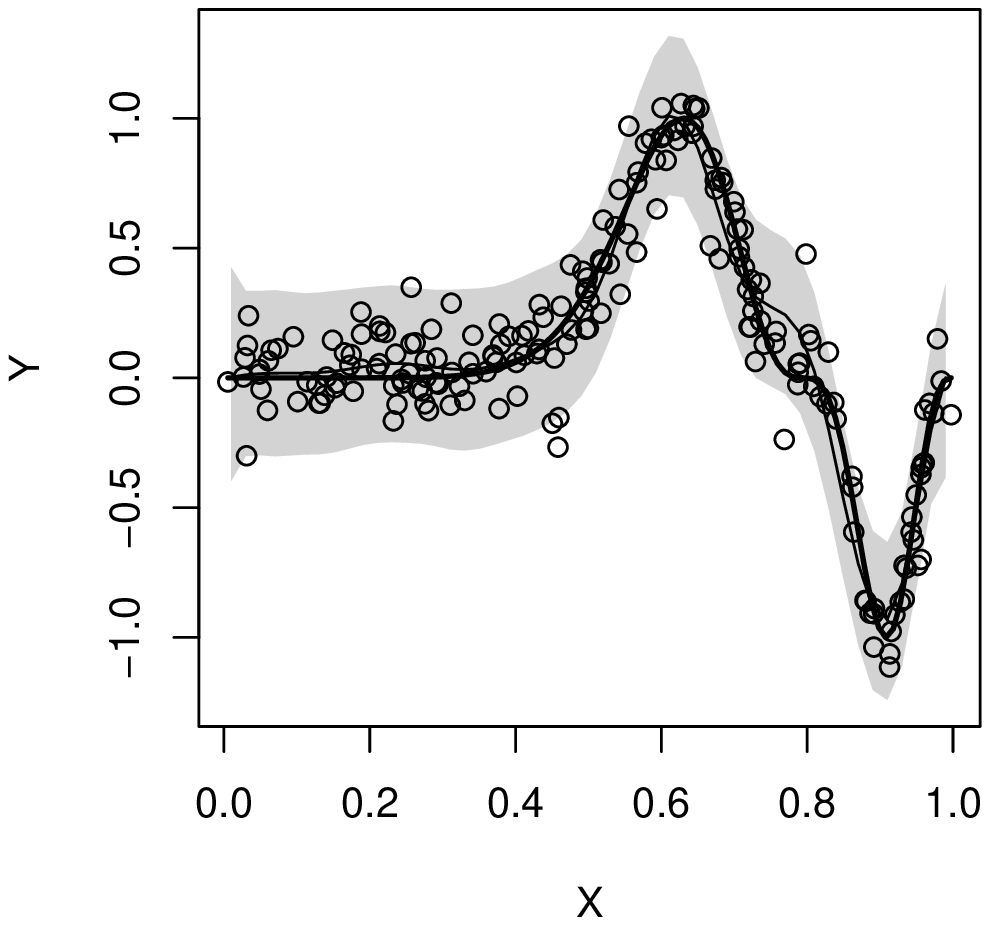}}}
\subfigure[]{\scalebox{0.6}{\includegraphics{Reg2.eps}}}
\end{center}
\caption{Panel~(a): Data and the plausibility contours at selected values of $x$. Panel~(b): Data, the true mean curve (heavy line), the fitted B-spline regression curve (thin line), and the 95\% pointwise prediction band.}
\label{fig:covariates}
\end{figure}

%\begin{table}[t]
%\centering
%\begin{tabular}{cc c c}
%\hline
%\multirow{2}{*}{{n}} & \multicolumn{2}{c}{{Coverage probability}}  \\
%\cline{2-4}
% & Normal & Student's t & Uniform \\
%\hline
%Bayes & 0.80 (2.77) & 0.85 (4.38) & 0.77 (9.65)  \\
%IM & 0.80 (2.79) & 0.81 (3.91) & 0.80 (9.97)  \\
%\hline
%\end{tabular}
%\caption{Estimated coverage probabilities and mean length of 80\% prediction intervals for the Bayesian and IM approaches for linear regression, with various distributions for the errors.}
%\label{tab:coverage_mean}
%\end{table}

%\begin{figure}[t]
%\begin{center}
%\scalebox{0.6}{\includegraphics{}}
%\end{center}
%\caption{Plot of the Bayesian predictive density function and the IM plausibility contour for a single data set, concerning the simple regression example with uniform errors, as described in the text.}
%\label{fig:dens_plaus}
%\end{figure}

We end this section pointing out an important connection between the prediction IM developed here and the powerful {\em conformal prediction} presented in \citet{Vovk:2005}.  The reader may have recognized the $\Psi$ function in the A-step of our construction as the so-called {\em non-conformity measure}, an essential component in the conformal prediction framework.  Moreover, the basic output from the IM construction presented below is the plausibility contour in \eqref{eq:contourfunc}, which is precisely conformal prediction's p-value or transducer. The theory in \citet{Vovk:2005} takes this conformal transducer, which is uniformly distributed as stated in Theorem~\ref{thm:valid}, and constructs a prediction set as in \eqref{eq:pred.set} with the prediction coverage probability property as in \eqref{eq:coverage}. It was recently recognized \citep{CellaMartinConformal} that the conformal prediction output could be converted into a uniformly valid probabilistic predictor in the sense of Definition~\ref{def:uvalid}, one that can make valid belief assignments, by treating the transducer as the contour of a consonant plausibility function via \eqref{eq:cons}.  We refer to this general probabilistic predictor construction as ``conformal + consonance,'' and all it requires is that the conformal transducer $\pi_x^n$ be a plausibility contour function in the sense that it satisfy $\sup_y \pi_x^n(y)=1$ for all $(z^n,x)$. This is easy to verify in cases where $Y$ is a continuous random variable.  Indeed, for the $\Psi$ function in \eqref{eq:nonconform_reg}, the supremum is attained at $y = \hat\mu_{-(n+1)}^{n+1}(x)$.  In other cases, like in classification where $Y$ is discrete, the ``conformal + consonance'' construction is not so straightforward.  We discuss these considerations next in Section~\ref{S:Classification}.

\section{Probabilistic prediction in classification}
\label{S:Classification}

%{\color{red}
%Start with the same kind of construction of $\phi_n$ using a non-conformity measure.  Do the IM construction as before but point out the possibility that $\YY_{X_{n+1}}^n(\S)$ could be empty.  Describe the remedies---Dempster's rule and stretching---and their properties.  Emphasize that stretching is more efficient.  Then explain how these two different approaches correspond to two different adjustments to ``conformal + consonance,'' with the stretching adjustment being more efficient.
%}

In Section~\ref{S:Regression}, we found that the A-step boils down to the specification of a suitable real-valued, exchangeability-preserving function $\Psi$, which \citet{Vovk:2005} refer as a non-conformity measure. 
In binary classification problems, a $\Psi$ function like in \eqref{eq:nonconform_reg} can also be used here by encoding the binary labels as distinct real numbers. However, if there are more than two labels, and not in an ordinal scale where the assignment of different numbers to them is justified, there is no natural way to measure the distance between labels. Consequently, we cannot measure how wrong a prediction is---it is simply right or wrong \citep{Shafer2007ATO}. To circumvent this, \citet{Vovk:2005} suggest the following non-conformity measure based on nearest-neighbor classification:
\begin{equation}
\label{eq:nonconform_class}
\Psi(Z^{n+1}_{-i},Z_i) = \frac{\min_{j \in \I_{n+1}\setminus\{i\}:Y_j = Y_i}d(X_j,X_i)}{\min_{j \in \I_{n+1}\setminus\{i\}:Y_j \neq Y_i}d(X_j,X_i)},
\end{equation}
where $d$ is the Euclidean distance. In words, $\Psi(Z^{n+1}_{-i},Z_i)$ is large if $X_i$ is close to an element in $X^{n+1}_{-i}$ with a label different from $Y_i$ and far from any element in $X^{n+1}_{-i}$ with label equal to $Y_i$. If both the numerator and the denominator in \eqref{eq:nonconform_class} are 0, \citet{Shafer2007ATO} recommend taking the ratio also to be 0. Other non-conformity measures for classification problems can be found in \citet{Vovk:2005}.

Two factors were fundamental to the specification of the association \eqref{eq:astepCPrank} in Section~\ref{S:Regression}, namely the identification of $\Psi$, so that $Z^{n+1}$ can be mapped to $T^{n+1}$ preserving exchangeability, and the continuity of the $T_i$'s. In classification, however, the $Y_i$'s are not continuous, so there could be ties in the $T_i$'s.  Consequently, their ranks would be no longer uniform distributed on $\I_{n+1}$.  Luckily, when ties are possible, $r(T_{n+1})$ is stochastically no larger than the discrete uniform distribution it would take if there were no ties.  This leads to an ``association'' of the form 
\[ r(T_{n+1}) = U, \quad U \leq_{\text{st}} \unif(\I_{n+1}). \]
But for situations like this where the association involves a stochastic inequality, the general arguments in \citet[][Sec.~5]{marginalmartin} imply that the inequality can be ignored and the association \eqref{eq:astepCPrank}---with stochastic equality---can still be used.

%Having identified $\Psi$, so that $Z^{n+1}$ can be mapped to $T^{n+1}$ preserving exchangeability, 
Having identified the appropriate association, the IM construction proceeds analogously to that in the previous section: the A-step is completed by writing \eqref{eq:astepCPrank}, the random set \eqref{eq:S} is chosen in the P-step to target the unobserved realization of the auxiliary variable $U$, and, in the C-step, the ingredients in the A- and P-steps are combined to get $\YY_{x_{n+1}}^n(\U)$, a data-dependent random subset of $\YY$. However, due to the discreteness of $\YY$, it is possible that $\YY_{x_{n+1}}^n(\U)$ is empty with positive $\R_n$-probability. As discussed in Section~\ref{S:IM}, in these cases, some adjustment to the probabilistic predictor in \eqref{eq:im.output} is necessary to avoid the counter-intuitive ``conflict'' cases where realizations of the random set $\YY_{x_{n+1}}^n(\U)$ happens to be empty. There is a sense in which empty prediction sets could be meaningful, but we defer this discussion to Section~\ref{S:discuss}.

There are two available adjustments to account for the potentially empty realizations of the random set $\YY_{x_{n+1}}^n(\S)$.  The first, and probably most intuitive, is {\em conditioning} on the event that the random set is non-empty, which happens to be equivalent to Dempster's rule of combination \citep[e.g.,][Chap.~3]{shafer1976mathematical}.  For example, the post-conditioning plausibility contour is given by 
\[ y_{n+1} \mapsto \R_n\{\YY_{x_{n+1}}^n(\U) \ni y_{n+1} \mid \YY_{x_{n+1}}^n(\U) \neq \varnothing\}. \]
It is easy to see that conditioning simply rescales the original plausibility contour, making it larger at each $y_{n+1} \in \YY$.  Clearly, if the unadjusted probabilistic predictor is valid, then this conditioning adjustment---which only inflates its plausibility contour values---cannot fail to be valid.  This inflation does, however, suggest a potential loss of efficiency, e.g., larger prediction sets in \eqref{eq:pred.set}.

% Perhaps, the most intuitive adjustment consists on {\em conditioning} the probabilities in \eqref{eq:im.output} on $\YY_{x_{n+1}}^n(\S)$ being non-empty. Note that, because 
% \[\YY_{x_{n+1}}^n(\S)\cap \YY \neq \varnothing \Longleftrightarrow \YY_{x_{n+1}}^n(\S) \neq \varnothing,\]
% this conditioning approach is equivalent to use {\em Dempster's rule of combination} \citep[e.g.,][Chap.~3]{shafer1976mathematical} to combine the information in the random sets $\YY_{x_{n+1}}^n(\S)$ and $\mathcal{E}$, where $\mathcal{E}$ has distribution
% \[\prob_\mathcal{E}(\mathcal{E} = \YY) = 1.\]
% Validity of the probabilistic predictor derived from this conditioning approach is basically a consequence of the arguments in \citet{leafliu2012}. 

The second adjustment strategy, designed to preserve validity without sacrificing efficiency, is based on a suitable {\em stretching} of the original random set; see, e.g., \citet{leafliu2012}. Roughly, those $\U$ such that $\YY_{x_{n+1}}^n(\U) = \varnothing$ correspond to ``conflict cases,'' and Dempster's conditioning rule simply removes these conflict cases and renormalizes the $\U$-probabilities.  As an alternative, \citet{leafliu2012} suggested to stretch those conflict $\U$ cases just enough so that $\YY_{x_{n+1}}^n(\U)$ is non-empty.  Their formulation was in the context of inference under non-trivial parameter constraints, but here we apply this to classification.  

%{\color{red} In the early developments of the IM theory, \citet{leafliu2012} observed that the emptiness phenomena discussed above is common when there are non-trivial constraints on the space of the unknown quantity of interest. If the intersection between the IM random set output and the constraint set is empty with positive $\prob_\S$-probability, then some focal elements of the IM random set just contain values of the quantity of interest that are not in the constraint set, being, therefore, undesirable. Such focal elements are called {\em conflict cases}, and it becomes natural to frame the problem of incorporating constraints into an IM in terms of handling these conflict cases \citep{martinbook}. Even though, in the classification problems here considered, there is no constraint on the values $Y_{n+1}$ can take, the stretching strategy can still be applied if we treat $\YY$ itself as the constraint. To put it another way, what we assume as a constraint is what is already known, namely that $Y_{n+1} \in \YY$. We explore next how this stretching strategy works.}

Start by defining the set 
%consider the mapping from $\YY$ to a subset of $\UU = \I_{n+1}$ that includes all the possible u values that correspond to $y_{n+1}$ values in $\YY$ with respect to the observed $z^n,x_{n+1}$, i.e.,
\begin{equation}
\label{eq:UU}
\UU_{x_{n+1}}^n = \bigcup_{y_{n+1} \in \YY} \bigl\{r\bigl(T_{n+1}(z^n,z_{n+1})\bigr)\bigr\} \subseteq \I_{n+1}.
\end{equation}
There are only finitely many $y_{n+1}$ values, and the set $\UU_{x_{n+1}}^n$ defined above is just the collection of ranks that are possible for the given $Z^n$ and $x_{n+1}$.   
%More specifically, $\UU_{x_{n+1}}^n(\YY)$ is a subset of $\I_{n+1}$ composed by the ranks of $T_{n+1}(z^{n+1})$, for all $y_{n+1} \in \YY$.
Note that $\YY_{x_{n+1}}^n(\U)$ is empty if and only if $\U$ has empty intersection with $\UU_{x_{n+1}}^n$.  Therefore, the conflict cases mentioned above can be
alternatively defined as realizations of $\U$ that have empty intersection with $\UU_{x_{n+1}}^n$. This conflicting situation can be avoided if, instead of throwing out the conflict $\U$, we stretch it to a suitable $\U_e$, with $e \geq 0$ a stretching parameter that controls how far $\U$ is stretched toward $\UU_{x_{n+1}}^n$.  In particular, we take 
\[\U_e = \{1, 2, \ldots, U' + e\}, \quad U' \sim \unif(\I_{n+1}).\]
Following \citet{leafliu2012}, the parameter $e$ is chosen as the smallest value at which the intersection of $\U_e$ and $\UU_{x_{n+1}}^n$ is non-empty, i.e.,
\begin{align*}
\hat e &= \min\{e: \U_e \cap \UU_{x_{n+1}}^n \neq \varnothing\} = \begin{cases}
\min\UU_{x_{n+1}}^n - U' & \text{if $U' < \min \UU_{x_{n+1}}^n$} \\
0 & \text{otherwise}.
\end{cases}
\end{align*}
Consequently, $\U_{\hat e}$ would be
\begin{align*}
%\label{eq:S_e}
\U_{\hat e} &= \begin{cases}
\{1, 2, \ldots, \min\UU_{x_{n+1}}^n\} & \text{if $U' < \min \UU_{x_{n+1}}^n$} \\
\{1, 2, \ldots, U'\} & \text{otherwise}.
\end{cases}
\end{align*}
In summary, in the stretching IM, the IM's original random set output $\YY_{x_{n+1}}^n(\U)$ is replaced with $\YY_{x_{n+1}}^n(\U_{\hat e})$, and its guaranteed non-emptiness makes the probabilistic predictor derived from it valid.  It is also more efficient than conditioning since it avoids globally inflating the plausibility contour via renormalization, as the following example highlights.

%{\color{red}\citet{leafliu2012} formally shows that the stretching adjustment is more efficient than the conditioning adjustment. Intuitively, this makes sense as only the focal elements of $\S$ that intersect with $\UU_{x_{n+1}}^n$ end up being considered in the conditioning approach. In other words, all focal elements that are conflict cases are thrown away, which causes an overall inflation of the plausibility contours. The stretching IM approach, on the other hand, fixes the conflict cases through stretching. Overall inflation of the plausibility contours is then prevented, as all focal elements are maintained.} 

%However, sometimes $\YY_{x_{n+1}}^n(\S_e)$ is ``bigger'' than necessary, which may cause inefficiency. To overcome this, a contraction step is performed where everything in $\S_{\hat e}$ that does not intersect with $\UU_{x_{n+1}}^n$ gets chopped off. In summary, in the stretching/contracting IM, the IM's original random set output $\YY_{x_{n+1}}^n(\S)$ is replaced with $\YY_{x_{n+1}}^n(\S^*_e)$, where
%\[\S_{e}^* = \S_{\hat e} \cap \UU_{x_{n+1}}^n.\]

For illustration, consider the data in Table~\ref{tab:data}, taken from \citet[][p.~304]{agresti2003categorical}, describing the primary food choices and lengths of $n=39$ male alligators caught in Lake George, Florida. Assume the 40th caught alligator is two meters long, i.e., $X_{n+1} = 2$. The goal is to predict $Y_{n+1}$, its primary food choice.  Note that
\begin{equation}
\label{eq:YY_example}
\YY_{x_{n+1}}^n(\U) = \begin{cases}
\{I\} & \text{with probability 0.1} \\
\{I, F\} & \text{with probability 0.2} \\
\{I, F, O\} & \text{with probability 0.3} \\
\varnothing & \text{with probability 0.4.}
\end{cases}
\end{equation}
The corresponding
plausibility contour, as given in \eqref{eq:contour}, is represented by the solid lines in Figure~\ref{fig:IFO}(a). By
thresholding it at any $\alpha>0.6$ we obtain $100(1-\alpha)$\% prediction sets that are empty, which is undesirable.
%Suppose the singleton assertions $\{I\}$, $\{O\}$ and $\{F\}$ are of interest. By noticing that

\begin{table}[t]
\centering
\begin{tabular}{c c | c c | c c}
\hline
%\multirow{2}{*}{{n}} & \multicolumn{2}{c}{{Coverage probability}}  \\
%\cline{2-4}
Length (m) & Choice & Length (m) & Choice  & Length (m) & Choice \\
\hline
1.30 & I &  1.65 & I &  2.03 & F   \\
1.32 & F &   1.65 & F &  2.31 & F  \\
1.32 & F &   1.68 & F  & 2.36 & F\\
1.40 & F &   1.70 & I  & 2.46 & F\\
1.42 & I &   1.73 & O  & 3.25 & O\\
1.42 & F &   1.78 & F  & 3.28 & O\\\
1.47 & I &   1.78 & O  & 3.33 & F \\
1.47 & F &   1.80 & F & 3.56 & F\\
1.50 & I &   1.85 & F & 3.58 & F\\
1.52 & I &   1.93 & I  & 3.66 & F\\
1.63 & I &   1.93 & F  & 3.68 & O\\
1.65 & O &   1.98 & I  & 3.71 & F\\
1.65 & O &   2.03 & F  & 3.89 & F\\
\hline
\end{tabular}
\caption{Primary food choice (I, invertebrates; F, fish; O, other) and lengths (in meters) for $n=39$ male alligators  \citep[][p.~304]{agresti2003categorical}.}
\label{tab:data}
\end{table}

The plausibility contour conditioned on \eqref{eq:YY_example} $\neq \varnothing$ is easy to evaluate, and is represented by the dashed lines in Figure~\ref{fig:IFO}(a).
%the upper and lower probabilities for these assertions, conditioning on $\YY_{x_{n+1}}^{n}(\S)$ being non-empty, are straightforward to calculate, and are shown in Figure~\ref{fig:IFO}. 
To calculate the plausibility contour under the stretching approach, we obtain, after some calculations, $\UU_{x_{n+1}}^{n} = \{17, 21, 29\}$. As $\min\UU_{x_{n+1}}^{n}$ = 17,
\begin{align*}
\U_{\hat e} &= \begin{cases}
\{1, 2, \ldots, 17\} & \text{if $U' < 17$} \\
\{1, 2, \ldots, U'\} & \text{otherwise}.
\end{cases}
\end{align*}
where $U' \sim \unif(1, 2, \ldots, 40)$. Therefore,
%\begin{align*}
%\S_{\hat e}^* &= \S_{\hat e} \cap %\UU_{x_{n+1}}^{n} \\
%&= \begin{cases}
%\{17\} & \text{with probability 0.5} \\
%\{17, 21\} & \text{with probability 0.2} %\\
%\{17, 21, 29\} & \text{with probability %0.3,} \\
%\end{cases}
%\end{align*}
%and, consequently,
\begin{align*}
%\label{eq:YY_example}
\YY_{x_{n+1}}^{n}(\U_{\hat e}) &= \begin{cases}
\{I\} & \text{with probability 0.5} \\
\{I, F\} & \text{with probability 0.2} \\
\{I, F, O\} & \text{with probability 0.3,} \\
\end{cases}
\end{align*}
and the dotted lines in Figure~\ref{fig:IFO}(a) illustrate its corresponding plausibility contour. Note, first, that empty prediction sets are eliminated with both the conditioning and the stretching adjustments. Second, for any $\alpha$, the $100(1-\alpha)\%$ prediction sets derived from the stretching adjustment are no larger than the corresponding ones derived from the conditioning adjustment, which
indicates that the former is no less efficient than the latter.
%The upper and lower probabilities for the three assertions of interest, based on the stretched random set above are also shown in Figure~\ref{fig:IFO}.
Another way to see this is through the difference between the upper and lower probabilities derived by the respective probabilistic predictors. \citet{DEMPSTER2008365} referred to this gap as the ``don’t know'' probability. Of course, between two valid probabilistic predictors, the one with less ``don't know'' is preferred because it is more efficient.  Figure~\ref{fig:IFO}(b) shows the upper and lower probabilities for the singleton assertions $\{I\}$, $\{O\}$ and $\{F\}$, for both strategies.  Clearly, stretching leads to a more efficient probabilistic predictor.

%The gains in efficiency are clear when we compare them to the upper and lower probabilities derived from the conditioning approach.

%\begin{figure}[t]
%\begin{center}
%\subfigure[$\S = \S_{1}$]{\scalebox{0.48}{\includegraphics{s1}}}
%\includegraphics[width=7.5cm]{}
%\end{center}
%\caption{Upper and lower probabilities for the singleton assertions \{I\}, \{F\} and \{O\} derived from the IM construction with the conditioning adjustment (solid line) and the stretching adjustment (dashed line).}
%\label{fig:IFO}
%\end{figure}

\begin{figure}[t]
\begin{center}
%\subfigure[$\S = \S_{1}$]{\scalebox{0.48}{\includegraphics{s1}}}
\subfigure[]{\scalebox{0.6}{\includegraphics{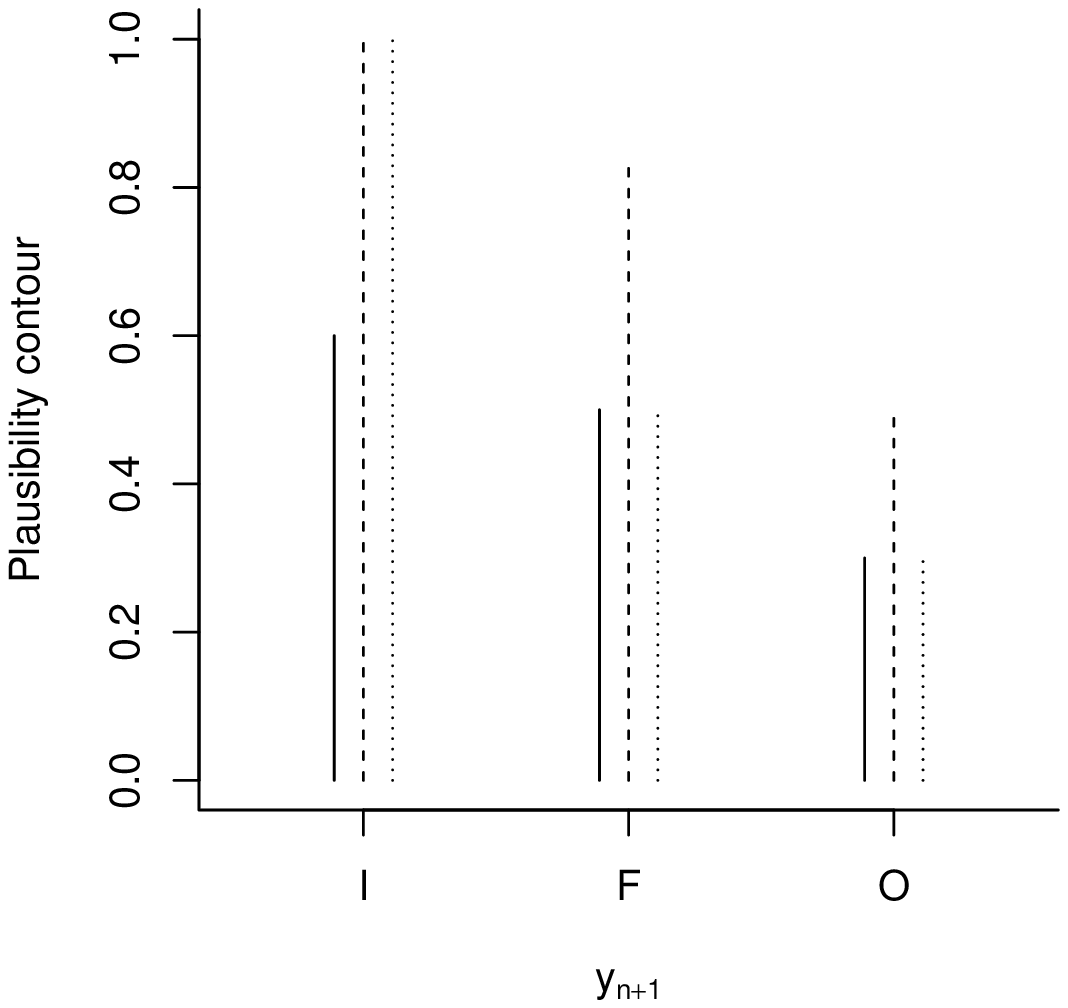}}}
\subfigure[]{\scalebox{0.6}{\includegraphics{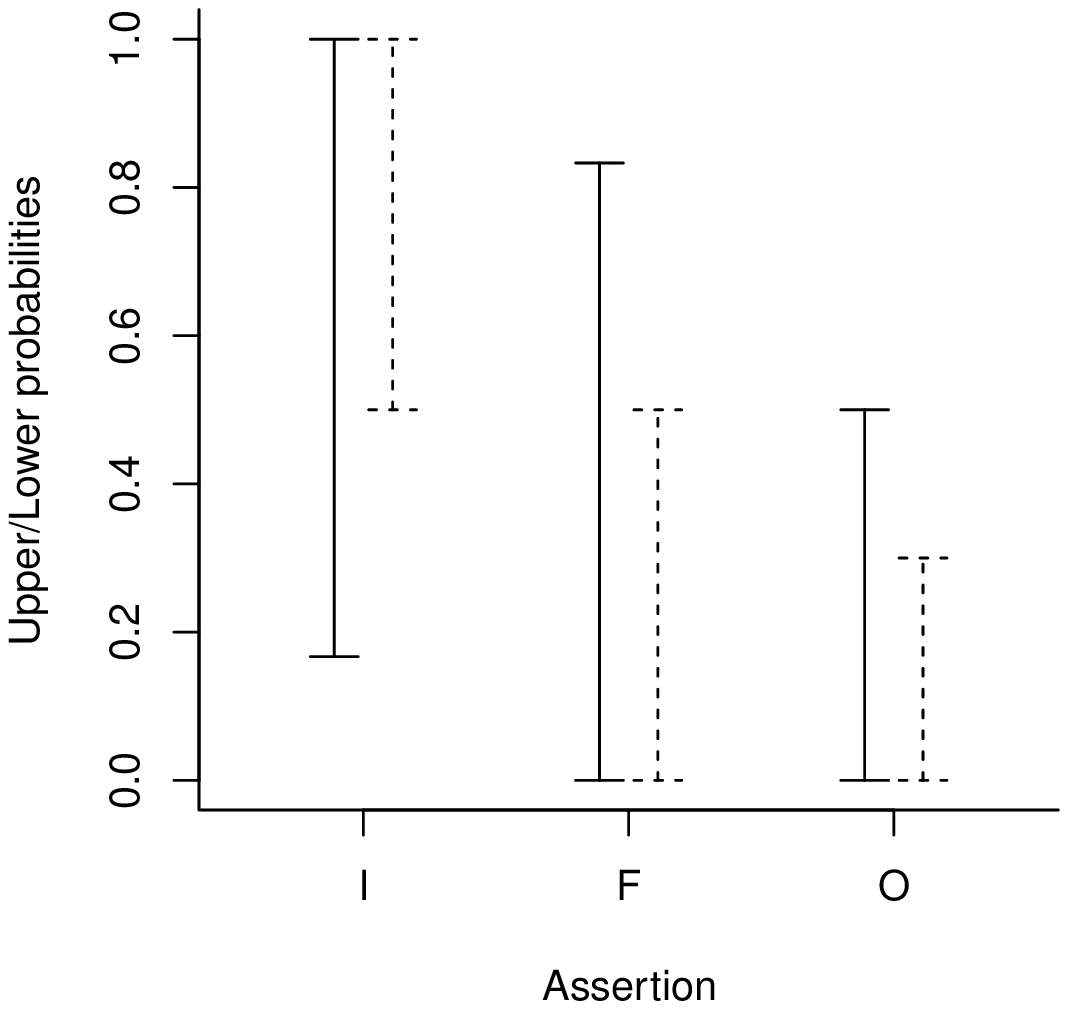}}}
\end{center}
\caption{Panel~(a): Plausibility contours in Equation~\eqref{eq:contour}, derived from an IM construction with no adjustment (solid lines), conditioning adjustment (dashed lines) and stretching adjustment (dotted lines).  Panel~(b): Upper and lower probabilities for the singleton assertions $\{I\}$, $\{F\}$ and $\{O\}$ derived from an IM construction with the conditioning adjustment (solid lines) and the stretching adjustment (dashed lines). These predictions are based on a new alligator of length $x_{n+1}=2$ meters.}
\label{fig:IFO}
\end{figure}

To further see this gain in efficiency we consider the {\em Glass Identification} data set from the USA Forensic Science Service, available in the UCI Machine Learning Repository \citep{Dua:2019}.\footnote{\url{https://archive.ics.uci.edu/ml/datasets/glass+identification}} It has 10 attributes associated with 214 glasses. The type of glass, a categorical variable---with six categories, including ``containers'' and ``headlamps''---is the response variable. The nine remaining variables, which describe the oxide content, i.e., Na, Fe, K, etc., are the explanatory variables. Classification of types of glass is relevant in criminology  applications, where glass fragments left at the scene of the crime may be important evidence if correctly identified. To evaluate the performance in classifying glass fragments, we randomly split the data in half and train both the conditioning and stretching strategies in the first half, with $\Psi$ function as in \eqref{eq:nonconform_class}. For further comparison, we also train a Bayesian multinomial regression model with default, non-informative priors on the parameters.\footnote{The {\em bamlss} R package \citep{Bayes_mult} was used to run the Bayesian analysis.} %and consider the IM with no adjustment in the plausibility contours, i.e., the standard conformal prediction output.
%The prediction of the responses in the second half of the data was done in what \citet{shafer2007} refer to as {\em on-line setting}. That is, the responses were predicted successively, each one being revealed and incorporated in the ``training'' data set  before the next was predicted.
Figure~\ref{fig:dens_plaus} plots the distribution functions of the corresponding plausibility contours for the responses in the second half of the data. As expected, uniform validity in \eqref{eq:uvalid.alt} fails for the Bayesian solution and holds for both IM solutions, with the one based on stretching being more efficient.  Of course, not being uniformly valid does not imply that the Bayesian prediction set will not achieve the nominal coverage, but we can check this directly.  Table~\ref{tab:covlen} shows the empirical coverage probabilities and the average sizes (cardinality) of 95\% prediction sets for the responses in the second half of the data.  Clearly, the Bayes approach does not provide valid prediction sets. 

%Table~\ref{tab:covlen} shows the empirical coverage probabilities and the average sizes (cardinality) of 95\% prediction sets for the responses in the second half of the data. As expected, both strategies lead to valid predictions, but stretching is slightly more efficient. {\color{red}For comparison, we also run a Bayesian classification solution within this experiment. More specifically, we chose the Bayesian multinomial regression model, with non-informative priors on the parameters.\footnote{The {\em bamlss} R package \citep{Bayes_mult} was used to run the Bayesian analysis.} Note that the derived Bayesian prediction sets are, on average, too small, not achieving the desired calibration.}}

\begin{figure}[t]
\begin{center}
\scalebox{0.7}{\includegraphics{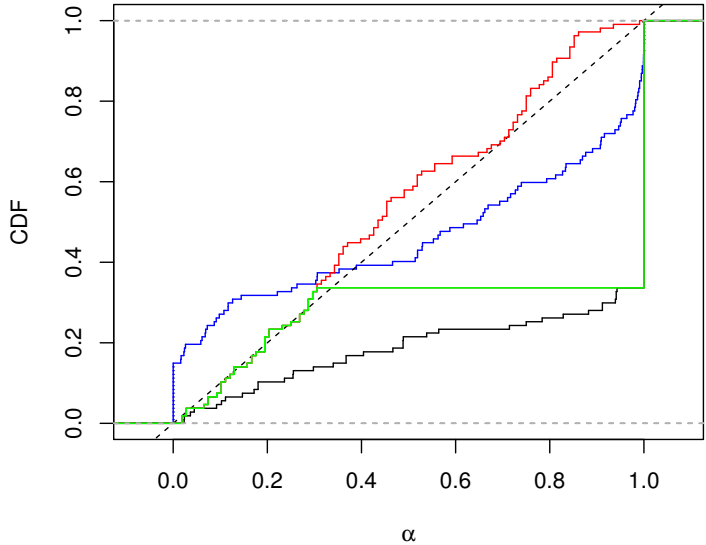}}
\end{center}
\caption{Distribution function for plausibility contours derived from an IM construction, with conditioning (black), stretching (green) and no adjustment (red), and a Bayesian
multinomial regression model (blue).}
\label{fig:dens_plaus}
\end{figure}

\begin{table}[t]
\centering
\begin{tabular}{c c c}
\hline
%\multirow{2}{*}{{n}} & \multicolumn{2}{c}{{Coverage probability}}  \\
%\cline{2-4}
Strategy & Coverage & Size \\
\hline
Conditioning & 0.96 & 3.07  \\
Stretching & 0.96 & 2.73  \\
Bayes & 0.80 & 1.71 \\
\hline
\end{tabular}
\caption{Coverage probabilities and average size of 95\% prediction sets in \eqref{eq:pred.set} derived from an IM construction, with conditioning and stretching adjustment, and a Bayesian multinomial regression model. }
\label{tab:covlen}
\end{table}

Recall from Section~\ref{S:Regression} that the probabilistic predictor derived from the ``conformal + consonance'' construction is uniformly valid according to Definition~\ref{def:uvalid}, given that the conformal transducer $\pi_x^n$ satisfies \eqref{eq:sup}. In regression problems, this condition follows naturally from the continuity of $Y$, and the derived probabilistic predictor is equivalent to the one that would be obtained from an IM construction (assuming both use the same $\Psi$ function). In classification problems, however, \eqref{eq:sup} may not hold because $Y$ is discrete. This implies the ``conformal + consonance'' cannot be applied directly without some adjustment. This is not surprising given that similar adjustments were needed in the IM construction discussed above too. To better see this, note that the distribution function for the conformal transducer is also shown in Figure~\ref{fig:dens_plaus}. The need of an adjustment is evident, as uniform validity fails and, consequently, the derived conformal prediction intervals obtained through \eqref{eq:pred.set} would not be calibrated for certain choices of $\alpha$.

A natural adjustment is to force the conformal transducer to attain the value 1. %through rescaling operations. 
Consider the following two adjusted conformal transducers:
\[\dot{\pi}_{x}^{n}(y) = \frac{\pi_{x}^{n}(y)}{\max_{y}\pi_{x}^{n}(y)},\]
and 
\begin{align*}
\ddot{\pi}_{x}^{n}(y) &= \begin{cases}
1 & \text{if $y= \hat y$}, \\
\pi_{x}^{n}(y) & \text{otherwise}, \\
\end{cases}
\end{align*}
where $\hat y = \arg\max_y \pi_{x}^n(y)$ and $y \in \YY$. In words,
$\dot{\pi}_{x}^{n}(y)$ takes the conformal transducers for the different $y \in \YY$ and divide them by their maximum, and $\ddot{\pi}_{x}^{n}(y)$ maintains all the conformal transducer values except for its maximum, which is assigned the value 1. That both adjusted transducers reach the value 1 makes the probabilistic predictors derived by them, through \eqref{eq:cons}, uniformly valid in the sense of Definition~\ref{def:uvalid}. It is also easy to see that these probabilistic predictors obtained from $\dot{\pi}_{x}^{n}(y)$ and $\ddot{\pi}_{x}^{n}(y)$  are equivalent to the ones derived from the IM construction with, respectively, the conditioning and the stretching adjustments. This shows that forcing consonance of the conformal transducer is not an ad hoc strategy; it is justified by the corresponding operations on random sets.  Moreover, in light of this connection to the IM's random set adjustments, we find that the second adjustment to the conformal predictor, i.e., setting the maximum value equal to 1, is the more efficient adjustment.

\section{Conclusion}
\label{S:discuss}

Here we focused on the important problem of prediction in supervised learning applications with no model assumptions (except exchangeability). We presented a notion of prediction validity, one that goes beyond the usual coverage probability guarantees of prediction sets. This condition assures the reliability of the degrees of belief, obtained from a imprecise probability distribution, assigned to all relevant assertions about the yet-to-be-observed quantity of interest. We also showed that, by following a new variation on the (generalized) IM construction first presented in \citet{martin2015, MARTIN2018105}, this validity property can be easily achieved.  We also noted the connection between this new IM construction and the conformal prediction strategy in, e.g., \citet{Vovk:2005}, and presented illustrations in both regression and classification settings. 
This connection is of paramount importance, as it implies that no new methodology is needed to achieve the (uniform) validity properties presented here. All that is needed is a possibilistic interpretation of the conformal prediction output.

Exchangeability was crucial to our IM construction, that is, without exchangeability, we cannot establish the distribution of the auxiliary variables.  While exchangeability is a relatively weak assumption compared to iid from a parametric family, there are, of course, situations where exchangeability is inappropriate, such as time series or spatial applications.  Work to develop conformal prediction methods in not-exactly-exchangeable settings is an active area of current research \citep[e.g.,][]{mao2020valid}, and it would be interesting to see what the IM perspective has to offer here. 

In Section~\ref{S:IM} we noted that the IM construction there leads naturally to a notion of {\em marginal} validity, which is different (and weaker) than the so-called {\em conditional} validity property.  While this is usually framed in the context of prediction sets, the corresponding definition in the context of probabilistic predictors is
\[ \prob\{\uPi_x^n(A) \leq \alpha, Y_{n+1} \in A \mid X_{n+1}=x\} \leq \alpha \quad \forall \; x, \]
and, of course, for all $(\alpha, n, A, \prob)$ as before.  Given the impossibility results in, e.g., \citet{leiwasserNPregression}, it seems unlikely that conditional validity can be achieved by any non-trivial probabilistic predictor.  Asymptotic conditional validity is possible, and some promising ideas are given in, e.g.,  \citet{chernozhukov2019distributional}.   

We mentioned in Section~\ref{S:Classification} that, surprisingly, empty random sets may have some practical value.  This concerns the so-called {\em open-} versus {\em closed-world} view of the prediction problem.  If the world is closed in the sense that all the possible labels are known, then it makes sense to remove the empty set cases and, hence, force consonance.  However, if the world is open in the sense that other labels are possible, then the empty set realization is an indication that the new object being classified may be of previously-unknown type, which itself is valuable information.  How this open-world view can be captured by the IM framework developed here remains an open question.

\section*{Acknowledgments}

The authors thank the reviewers from the conference proceedings and journal submissions for their valuable feedback, and the {\em IJAR} guest editors---Andr\'es Cano, Jasper De Bock, and Enrique Miranda---for the invitation to contribute to the special journal issue.  This work is partially supported by the U.S.~National Science Foundation, grants DMS--1811802 and SES--2051225.

\bibliographystyle{apalike}
\bibliography{literature}

\newcommand{\noop}[1]{}
\begin{thebibliography}{}

\bibitem[Agresti, 2003]{agresti2003categorical}
Agresti, A. (2003).
\newblock {\em Categorical Data Analysis}.
\newblock Wiley Series in Probability and Statistics. Wiley.

\bibitem[Balch et~al., 2019]{Ryansatellite}
Balch, M.~S., Martin, R., and Ferson, S. (2019).
\newblock Satellite conjunction analysis and the false confidence theorem.
\newblock {\em Proceedings of the Royal Society A: Mathematical, Physical and
  Engineering Sciences}, 475(2227):1--20.

\bibitem[Cahoon and Martin, 2020]{cahoon2019generalized1}
Cahoon, J. and Martin, R. (2020).
\newblock Generalized inferential models for meta-analyses based on few
  studies.
\newblock {\em Statistics and Applications}, 18(2):299--316.

\bibitem[Cahoon and Martin, 2021]{CAHOON202151}
Cahoon, J. and Martin, R. (2021).
\newblock Generalized inferential models for censored data.
\newblock {\em International Journal of Approximate Reasoning}, 137:51--66.

\bibitem[Campi et~al., 2009]{CAMPI2009382}
Campi, M., Calafiore, G., and Garatti, S. (2009).
\newblock Interval predictor models: Identification and reliability.
\newblock {\em Automatica}, 45(2):382--392.

\bibitem[Cella and Martin, 2021a]{CellaMartinBelief}
Cella, L. and Martin, R. (2021a).
\newblock Approximately valid and model-free possibilistic inference.
\newblock In Den{\oe}ux, T., Lef{\`e}vre, E., Liu, Z., and Pichon, F., editors,
  {\em Belief Functions: Theory and Applications}, pages 127--136, Cham.
  Springer International Publishing.

\bibitem[Cella and Martin, 2021b]{CellaMartinISIPTA21}
Cella, L. and Martin, R. (2021b).
\newblock Valid inferential models for prediction in supervised learning
  problems.
\newblock In Cano, A., De~Bock, J., Miranda, E., and Moral, S., editors, {\em
  Proceedings of the Twelveth International Symposium on Imprecise Probability:
  Theories and Applications}, volume 147 of {\em Proceedings of Machine
  Learning Research}, pages 72--82. PMLR.

\bibitem[Cella and Martin, 2022]{CellaMartinConformal}
Cella, L. and Martin, R. (2022).
\newblock Validity, consonant plausibility measures, and conformal prediction.
\newblock {\em International Journal of Approximate Reasoning}, 141:110--130.

\bibitem[Chernozhukov et~al., 2019]{chernozhukov2019distributional}
Chernozhukov, V., Wüthrich, K., and Zhu, Y. (2019).
\newblock Distributional conformal prediction.
\newblock {\tt arXiv:1909.07889}.

\bibitem[Coolen, 2006]{CoolenBayes}
Coolen, F. P.~A. (2006).
\newblock On nonparametric predictive inference and objective {B}ayesianism.
\newblock {\em Journal of Logic, Language, and Information}, 15(1/2):21--47.

\bibitem[Dempster, 1967]{dempster1967}
Dempster, A.~P. (1967).
\newblock Upper and lower probabilities induced by a multivalued mapping.
\newblock {\em Annals of Mathematical Statististics}, 38:325--339.

\bibitem[Dempster, 1968]{dempster1968a}
Dempster, A.~P. (1968).
\newblock A generalization of {B}ayesian inference. ({W}ith discussion).
\newblock {\em Journal of the Royal Statistical Society: Series B (Statistical
  Methodology)}, 30:205--247.

\bibitem[Dempster, 2008]{DEMPSTER2008365}
Dempster, A.~P. (2008).
\newblock The {D}empster--{S}hafer calculus for statisticians.
\newblock {\em International Journal of Approximate Reasoning}, 48(2):365--377.

\bibitem[Dempster, 2014]{dempster.copss}
Dempster, A.~P. (2014).
\newblock Statistical inference from a {D}empster--{S}hafer perspective.
\newblock In Lin, X., Genest, C., Banks, D.~L., Molenberghs, G., Scott, D.~W.,
  and Wang, J.-L., editors, {\em Past, Present, and Future of Statistical
  Science}, chapter~24. Chapman \& Hall/CRC Press.

\bibitem[Den{\oe}ux, 2006]{denoeux2006}
Den{\oe}ux, T. (2006).
\newblock Constructing belief functions from sample data using multinomial
  confidence regions.
\newblock {\em International Journal of Approximate Reasoning}, 42(3):228--252.

\bibitem[Den{\oe}ux, 2014]{denoeux2014}
Den{\oe}ux, T. (2014).
\newblock Likelihood-based belief function: justification and some extensions
  to low-quality data.
\newblock {\em International Journal of Approximate Reasoning},
  55(7):1535--1547.

\bibitem[Den{\oe}ux and Li, 2018]{denoeux.li.2018}
Den{\oe}ux, T. and Li, S. (2018).
\newblock Frequency-calibrated belief functions: review and new insights.
\newblock {\em International Journal of Approximate Reasoning}, 92:232--254.

\bibitem[Dua and Graff, 2017]{Dua:2019}
Dua, D. and Graff, C. (2017).
\newblock {UCI} machine learning repository.
\newblock \url{http://archive.ics.uci.edu/ml}. University of California,
  Irvine, School of Information and Computer Sciences.

\bibitem[Dubois and Prade, 1988]{dubois.prade.book}
Dubois, D. and Prade, H. (1988).
\newblock {\em Possibility {T}heory}.
\newblock Plenum Press, New York.

\bibitem[Ermini~Leaf and Liu, 2012]{leafliu2012}
Ermini~Leaf, D. and Liu, C. (2012).
\newblock Inference about constrained parameters using the elastic belief
  method.
\newblock {\em International Journal of Approximate Reasoning}, 53(5):709--727.

\bibitem[Fisher, 1935]{fisherfiducial}
Fisher, R.~A. (1935).
\newblock The fiducial argument in statistical inference.
\newblock {\em Annals of Eugenics}, 6(4):391--398.

\bibitem[Fisher, 1973]{fisher1973}
Fisher, R.~A. (1973).
\newblock {\em Statistical Methods and Scientific Inference}.
\newblock Hafner Press, New York, 3rd edition.

\bibitem[Ghosh and Ramamoorthi, 2003]{ghosh2003bayesian}
Ghosh, J.~K. and Ramamoorthi, R.~V. (2003).
\newblock {\em Bayesian {N}onparametrics}.
\newblock Springer-Verlag, New York.

\bibitem[Gong and Meng, 2021]{gong.meng.update}
Gong, R. and Meng, X.-L. (2021).
\newblock Judicious judgment meets unsettling updating: {D}ilation, sure loss,
  and {S}impson's paradox.
\newblock {\em Statistical Science}, 36(2):169--190.

\bibitem[Gr\"unwald, 2018]{grunwald.safe}
Gr\"unwald, P. (2018).
\newblock Safe probability.
\newblock {\em Journal of Statistical Planning and Inference}, 195:47--63.

\bibitem[Hahn et~al., 2018]{hahn.martin.walker.pred}
Hahn, P.~R., Martin, R., and Walker, S.~G. (2018).
\newblock On recursive {B}ayesian predictive distributions.
\newblock {\em Journal of the American Statistical Association},
  113(523):1085--1093.

\bibitem[Hannig et~al., 2016]{MainHaning}
Hannig, J., Iyer, H., Lai, R. C.~S., and Lee, T. C.~M. (2016).
\newblock Generalized fiducial inference: A review and new results.
\newblock {\em Journal of the American Statistical Association},
  111(515):1346--1361.

\bibitem[Kuleshov et~al., 2018]{kuleshov2018accurate}
Kuleshov, V., Fenner, N., and Ermon, S. (2018).
\newblock Accurate uncertainties for deep learning using calibrated regression.
\newblock {\tt arXiv:1807.00263}.

\bibitem[Lawless and Fredette, 2005]{LawlessandFredette2005}
Lawless, J.~F. and Fredette, M. (2005).
\newblock Frequentist prediction intervals and predictive distributions.
\newblock {\em Biometrika}, 92(3):529--542.

\bibitem[Lei and Wasserman, 2014]{leiwasserNPregression}
Lei, J. and Wasserman, L. (2014).
\newblock Distribution-free prediction bands for non-parametric regression.
\newblock {\em Journal of the Royal Statistical Society: Series B (Statistical
  Methodology)}, 76(1):71--96.

\bibitem[Mao et~al., 2020]{mao2020valid}
Mao, H., Martin, R., and Reich, B. (2020).
\newblock Valid model-free spatial prediction.
\newblock {\tt arXiv:2006.15640}.

\bibitem[Martin, 2015]{martin2015}
Martin, R. (2015).
\newblock Plausibility functions and exact frequentist inference.
\newblock {\em Journal of the American Statistical Association},
  110(512):1552--1561.

\bibitem[Martin, 2018]{MARTIN2018105}
Martin, R. (2018).
\newblock On an inferential model construction using generalized associations.
\newblock {\em Journal of Statistical Planning and Inference}, 195:105--115.

\bibitem[Martin, 2019]{MARTIN2019IJAR}
Martin, R. (2019).
\newblock False confidence, non-additive beliefs, and valid statistical
  inference.
\newblock {\em International Journal of Approximate Reasoning}, 113:39--73.

\bibitem[Martin, 2021]{imprecisefrequentist}
Martin, R. (2021).
\newblock An imprecise-probabilistic characterization of frequentist
  statistical inference.
\newblock {\em Researchers.One},
  \url{https://researchers.one/articles/21.01.00002}.

\bibitem[Martin and Lingham, 2016]{Martin2016PriorFreePP}
Martin, R. and Lingham, R.~T. (2016).
\newblock Prior-free probabilistic prediction of future observations.
\newblock {\em Technometrics}, 58:225--235.

\bibitem[Martin and Liu, 2013]{mainMartin}
Martin, R. and Liu, C. (2013).
\newblock Inferential models: A framework for prior-free posterior
  probabilistic inference.
\newblock {\em Journal of the American Statistical Association}, 108:301--313.

\bibitem[Martin and Liu, 2015a]{condmartin}
Martin, R. and Liu, C. (2015a).
\newblock Conditional inferential models: combining information for prior-free
  probabilistic inference.
\newblock {\em Journal of the Royal Statistical Society: Series B (Statistical
  Methodology)}, 77:195--217.

\bibitem[Martin and Liu, 2015b]{martinbook}
Martin, R. and Liu, C. (2015b).
\newblock {\em Inferential Models: Reasoning with Uncertainty}.
\newblock Monographs in Statistics and Applied Probability Series. Chapman \&
  Hall/CRC Press.

\bibitem[Martin and Liu, 2015c]{marginalmartin}
Martin, R. and Liu, C. (2015c).
\newblock Marginal inferential models: Prior-free probabilistic inference on
  interest parameters.
\newblock {\em Journal of the American Statistical Association},
  110(512):1621--1631.

\bibitem[Shafer, 1976]{shafer1976mathematical}
Shafer, G. (1976).
\newblock {\em A Mathematical Theory of Evidence}.
\newblock Princeton University Press, Princeton, N.J.

\bibitem[Shafer and Vovk, 2007]{Shafer2007ATO}
Shafer, G. and Vovk, V. (2007).
\newblock A tutorial on conformal prediction.
\newblock {\em Journal of Machine Learning Research}, 9:371--421.

\bibitem[Umlauf et~al., 2021]{Bayes_mult}
Umlauf, N., Klein, N., Simon, T., and Zeileis, A. (2021).
\newblock {bamlss}: A {L}ego toolbox for flexible {B}ayesian regression (and
  beyond).
\newblock {\em Journal of Statistical Software}, 100(4):1--53.

\bibitem[Vovk et~al., 2005]{Vovk:2005}
Vovk, V., Gammerman, A., and Shafer, G. (2005).
\newblock {\em Algorithmic Learning in a Random World}.
\newblock Springer-Verlag, Berlin, Heidelberg.

\bibitem[Vovk and Petej, 2014]{vovk.petej.2014}
Vovk, V. and Petej, I. (2014).
\newblock Venn--abers predictors.
\newblock In Zhang, N.~L. and Tian, J., editors, {\em Proceedings of the
  Thirtieth Conference on Uncertainty in Artificial Intelligence, {UAI} 2014,
  Quebec City, Quebec, Canada, July 23-27, 2014}, pages 829--838. {AUAI} Press.

\bibitem[Vovk et~al., 2018]{Vovk2018NonparametricPD}
Vovk, V., Shen, J., Manokhin, V., and Xie, M. (2018).
\newblock Nonparametric predictive distributions based on conformal prediction.
\newblock {\em Machine Learning}, 108:445--474.

\bibitem[Walley, 1991]{walley1991}
Walley, P. (1991).
\newblock {\em Statistical Reasoning with Imprecise Probabilities}.
\newblock Chapman \& Hall/CRC Monographs on Statistics \& Applied Probability.
  Taylor \& Francis.

\bibitem[Walley, 2002]{walley2002}
Walley, P. (2002).
\newblock Reconciling frequentist properties with the likelihood principle.
\newblock {\em Journal of Statistical Planning and Inference}, 105:35--65.

\bibitem[Wang et~al., 2012]{wang.hannig.iyer.2012}
Wang, C.~M., Hannig, J., and Iyer, H.~K. (2012).
\newblock Fiducial prediction intervals.
\newblock {\em Journal of Statistical Planning and Inference},
  142(7):1980--1990.

\end{thebibliography}

\end{document}